\newtheorem{dfn}{Definition}[section]
\newtheorem{lem}[dfn]{Lemma}
\newtheorem{thm}[dfn]{Theorem}
\newtheorem{alg}[dfn]{Algorithm}
\theoremstyle{definition}
\newtheorem{asm}[dfn]{Assumption}
\newtheorem{exm}[dfn]{Example}
\newtheorem{rem}[dfn]{Remark}
\title{Convergence Results of Forward-Backward Algorithms for Sum of Monotone Operators in Banach Spaces}
\date{January 22, 2021}
\author{Yekini
Shehu\footnote{Department of Mathematics, Zhejiang Normal University, Jinhua,
321004, People's Republic of China; Institute of Science and Technology (IST), Am Campus 1, 3400, Klosterneuburg, Vienna, Austria; e-mail: yekini.shehu@unn.edu.ng. }}
\begin{document}

\maketitle

\begin{abstract}
\noindent
It is well known that many problems in image recovery, signal processing, and machine learning can be modeled as finding zeros of the sum of maximal monotone and Lipschitz continuous monotone operators. Many papers have studied forward-backward splitting methods for finding zeros of the sum of two monotone operators in Hilbert spaces. Most of the proposed splitting methods in the literature have been proposed for the sum of maximal monotone and inverse-strongly monotone operators in Hilbert spaces. In this paper, we consider splitting methods for finding zeros of the sum of maximal monotone operators and Lipschitz continuous monotone operators in Banach spaces. We obtain weak and strong convergence results for the zeros of the sum of maximal monotone and Lipschitz continuous monotone operators in Banach spaces. Many already studied problems in the literature can be considered as special cases of this paper.\\

\noindent  {\bf Keywords:} inclusion problem; 2-uniformly convex Banach space; forward-backward algorithm; weak convergence; strong convergence.\\

\noindent {\bf 2010 MSC classification:} 47H05, 47J20,  47J25, 65K15, 90C25.

\end{abstract}

\section{Introduction}
\noindent Let $E$ be a real Banach space with norm $\|.\|,$ we denote by $E^*$ the dual of $E$ and $\langle f,x\rangle$ the value of $f\in E^*$ at $x\in E.$ Let $B:E\rightarrow 2^{E^*}$ be a maximal monotone operator and $A:E\rightarrow E^*$ be a Lipschitz continuous monotone operator. We consider the following inclusion problem: find $x\in E$ such that
\begin{eqnarray}\label{bami2}
   0 \in (A+B)x.
\end{eqnarray}
Throughout this paper, we denote the solution set of the inclusion problem (\ref{bami2}) by $(A+B)^{-1}(0)$. \\

\noindent
The inclusion problem \eqref{bami2} contains, as special cases, convexly constrained linear inverse problem, split feasibility
problem, convexly constrained minimization problem, fixed point problems, variational inequalities, Nash
equilibrium problem in noncooperative games, and many more. See, for instance, \cite{CChen,Combettes10,LionsMercier,MoudafiThera,Passtygb,PeacemanRachford} and the
references therein.\\

\noindent A popular method for solving problem \eqref{bami2} in real Hilbert spaces, is the well-known forward–backward splitting method introduced by Passty \cite{Passtygb} and Lions and Mercier \cite{LionsMercier}. The method is formulated as
\begin{eqnarray}\label{tseng}
 x_{n+1}=(I+\lambda_n B)^{-1}(I-\lambda_n A)x_n, ~~\lambda_n>0,
\end{eqnarray}
under the condition that $Dom(B) \subset Dom(A)$.
It was shown, see for example \cite{CChen}, that weak convergence of \eqref{tseng} requires quite restrictive
assumptions on $A$ and $B$, such that the inverse of $A$ is strongly monotone or $B$
is Lipschitz continuous and monotone and the operator $A + B$ is strongly monotone
on $Dom(B)$. Tseng in \cite{Tseng}, weakened these assumptions and included an
extra step per each step of \eqref{tseng} (called Tseng's splitting algorithm) and obtained weak convergence result in real Hilbert spaces. Quite recently, Gibali and Thong \cite{GibaliThong} have obtained strong convergence result by modifying
Tseng's splitting algorithm in real Hilbert spaces.\\

\noindent In this paper, we extend Tseng's result \cite{Tseng} to a Banach
space. We first prove the weak convergence of the sequence generated by our proposed method, assuming
that the duality mapping is weakly sequentially continuous. This weak convergence is
a generalization of Theorem 3.4 given in \cite{Tseng}. We next prove the strong convergence result for problem \eqref{bami2} under some mild assumptions and this extends Theorems 1 and 2 in \cite{GibaliThong} to Banach spaces. Finally, we apply our convergence results to the composite convex minimization problem in Banach spaces.

\section{Preliminaries}\label{Sec:Prelims}
In this section, we define some concepts and state few basic results that we will
 use in our subsequent analysis.
Let $S_E$ be the unit sphere of $E$, and $B_E$ the closed unit ball of $E$.\\

\noindent
Let $\rho_E:[0,\infty)\rightarrow [0,\infty)$ be the modulus of
smoothness of $E$ defined by
$$
\rho_E(t):=\sup\Big\{\frac{1}{2}(\|x+y\|+\|x-y\|)-1:\,x\in
S_E,\,\|y\|\leq t\Big\}.
$$
\noindent A Banach space $E$ is said to be $2$-uniformly smooth, if there exists a fixed
constant $c>0$ such that $\rho_E(t)\leq ct^2$. The space $E$ is said to be \textit{smooth} if
\begin{eqnarray}\label{ori}
\lim_{t\rightarrow 0} \frac{\|x+ty\|-\|x\|}{t}
\end{eqnarray}
exists for all $x,y\in S_E$. The space $E$ is also said to be \textit{uniformly smooth} if (\ref{ori}) converges
uniformly in $x,y\in S_E$. It is well known that if $E$ is $2$-uniformly smooth, then $E$ is uniformly smooth. It is said to be \textit{strictly convex} if $\|(x + y)/2\| <1$ whenever $x,y\in S_E$
and $x\neq y$. It is said to be \textit{uniformly convex} if $\delta_E(\epsilon) >0$ for all $\epsilon \in (0,2]$, where $ \delta_E$ is the \textit{modulus of convexity} of $E$ defined by
\begin{eqnarray}\label{ori2}
\delta_E(\epsilon):=\inf \Big\{1-\Big|\Big|\frac{x+y}{2}\Big|\Big|\mid x,y\in B_E, \|x-y\|\geq \epsilon \Big\}
\end{eqnarray}
for all $\epsilon \in [0,2]$. The space $E$ is said to be \textit{2-uniformly convex} if there exists $c >0$ such
that $\delta_E(\epsilon)\geq c\epsilon^2$ for all $\epsilon \in [0,2]$.
It is obvious that every 2-uniformly convex Banach
space is uniformly convex.
It is known that all Hilbert spaces are uniformly smooth and 2-uniformly convex.
It is also known that all the Lebesgue spaces $L_p$ are uniformly smooth and 2-uniformly convex whenever $1<p\leq 2$ (see \cite{Beauzamy}).\\

\noindent
 The \textit{normalized duality mapping} of $E$ into $E^*$ is defined by
$$
Jx:=\{x^* \in E^*\mid\langle x^*,x\rangle=\|x^*\|^2=\|x\|^2\}
$$
for all $x\in E$. The normalized duality mapping $J$ has the following properties (see, e.g., \cite{Taka}):
\begin{itemize}
	\item if $E$ is reflexive and strictly convex with the strictly convex dual space $E^*$, then $J$ is single-valued, one-to-one and onto mapping. In this case, we can define the single-valued mapping $J^{-1}:E^*\rightarrow E$ and we have
$J^{-1}=J_{*}$, where $J_{*}$ is the normalized duality mapping on $E^*$;
	\item if $E$ is uniformly smooth, then $J$ is uniformly norm-to-norm continuous on each bounded subset of $E.$
\end{itemize}
Let us recall from \cite{alber1,Cioranescu} some examples for the normalized duality mapping $J$ in the uniformly convex and uniformly smooth Banach spaces $\ell_p$ and $L_p, 1 < p <\infty$.
\begin{itemize}
  \item For $\ell_p: Jx=\|x\|_{\ell_p}^{2-p}y \in \ell_q$, where $ x=(x_j)_{j\geq 1}$ and $y=(x_j|x_j|^{p-2})_{j\geq 1}$, $\frac{1}{p}+\frac{1}{q}=1$.
  \item For $L_p: Jx=\|x\|_{L_p}^{2-p}|x|^{p-2}x \in L_q$,  $\frac{1}{p}+\frac{1}{q}=1$.
\end{itemize}

\noindent Now, we recall some fundamental and useful results.

\begin{lem}
The space $E$ is 2-uniformly convex if and only if there exists $\mu_E \geq 1$
such that
\begin{eqnarray}\label{opari}
\frac{\|x+y\|^2+\|x-y\|^2}{2}\geq \|x\|^2+\|\mu^{-1}_E y\|^2
\end{eqnarray}
for all $x,y\in E$.
\end{lem}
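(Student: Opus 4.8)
The plan is to prove the two implications separately. That \eqref{opari} forces $2$-uniform convexity is a short direct computation, which I would carry out first; the converse is the substantive direction, and I would reduce it to an equivalent inequality via a linear change of variables.

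\smallskip
\emph{From \eqref{opari} to $2$-uniform convexity.} Suppose \eqref{opari} holds for some $\mu_E\ge1$. Fix $\epsilon\in(0,2]$ and take $u,v\in B_E$ with $\|u-v\|\ge\epsilon$. Applying \eqref{opari} to $x=\frac{u+v}{2}$, $y=\frac{u-v}{2}$ (so $x+y=u$, $x-y=v$), and using $\|u\|,\|v\|\le1$, one gets
\begin{equation*}
1\ \ge\ \frac{\|u\|^2+\|v\|^2}{2}\ \ge\ \Big\|\frac{u+v}{2}\Big\|^2+\frac{\mu_E^{-2}}{4}\,\epsilon^2 .
\end{equation*}
Hence $\big\|\frac{u+v}{2}\big\|\le\sqrt{1-\frac{\mu_E^{-2}}{4}\epsilon^2}\le 1-\frac{\mu_E^{-2}}{8}\epsilon^2$, using $\sqrt{1-t}\le1-t/2$ on $[0,1]$ (applicable since $\frac{\mu_E^{-2}}{4}\epsilon^2\le1$). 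Taking the infimum over admissible $u,v$ gives $\delta_E(\epsilon)\ge(\mu_E^{-2}/8)\epsilon^2$, so $E$ is $2$-uniformly convex.

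\smallskip
\emph{From $2$-uniform convexity to \eqref{opari}.} Assume $\delta_E(\epsilon)\ge c\epsilon^2$ for all $\epsilon\in[0,2]$, with $c>0$; we may assume $c\le\frac14$. Setting $a=x+y$, $b=x-y$, so $x=\frac{a+b}{2}$ and $\|y\|^2=\frac14\|a-b\|^2$, inequality \eqref{opari} is equivalent, via $\mu_E=(4K)^{-1/2}$, to the existence of $K>0$ with
\begin{equation*}
\Big\|\frac{a+b}{2}\Big\|^2\ \le\ \frac{\|a\|^2+\|b\|^2}{2}-K\|a-b\|^2\qquad(a,b\in E);
\end{equation*}
here $\mu_E\ge1$ automatically, since $b=-a$ forces $K\le1/4$. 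By positive homogeneity I may assume $\max\{\|a\|,\|b\|\}=1$, and by symmetry in $a,b$ that $\|a\|=1\ge\|b\|$. I would then split on $\|b\|$. If $\|b\|\le\frac12$, the triangle inequality $\big\|\frac{a+b}{2}\big\|\le\frac{1+\|b\|}{2}$ reduces the claim to $K\|a-b\|^2\le\frac14(1-\|b\|)^2$, which holds for small $K$ because $\|a-b\|\le1+\|b\|$ while $1-\|b\|\ge\frac12$. If $\frac12<\|b\|\le1$, set $\eta=1-\|b\|\in[0,\frac12)$ and $\hat b=b/\|b\|\in S_E$, and use the decomposition $\frac{a+b}{2}=(1-\eta)\frac{a+\hat b}{2}+\frac{\eta}{2}a$ together with $\big\|\frac{a+\hat b}{2}\big\|\le1-\delta_E(\|a-\hat b\|)\le1-c\|a-\hat b\|^2$ (valid as $\|a-\hat b\|\le2$) and the elementary bound $\|a-\hat b\|\ge\|a-b\|-\eta$ (note $\|a-b\|\ge\eta$). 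This gives $\big\|\frac{a+b}{2}\big\|\le1-\frac{\eta}{2}-\Delta$ with $\Delta=(1-\eta)c\|a-\hat b\|^2$; after squaring and simplifying (using $\eta<\frac12$ and $\Delta\le4c\le1$), it suffices to establish the scalar inequality $K\|a-b\|^2\le\frac14\eta^2+\frac12\Delta$, which I would check by a further split on whether $\|a-b\|\le2\eta$. All the constraints on $K$ arising this way are met by $K=c/16$, yielding $\mu_E=2/\sqrt{c}\ge1$.

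\smallskip
The delicate regime — and the place I expect the most care to be needed — is $\|b\|$ near $1$: there the quantity to be bounded below, $\frac12(\|a\|^2+\|b\|^2)-\big\|\frac{a+b}{2}\big\|^2$, is itself small (it vanishes exactly when $a,b$ are proportional, where \eqref{opari} is just the one-dimensional parallelogram law), so the naive estimate $\big\|\frac{a+b}{2}\big\|\le1-\delta_E(\|a-b\|)$, which ignores that $\|b\|<1$, is too weak, and one genuinely needs the decomposition through $\hat b$ that separates out the ``radial'' deficit $\eta$. An alternative that sidesteps the case analysis is to invoke the known equivalence (Xu) between a power-type modulus of convexity and power-type uniform convexity of $\|\cdot\|^p$, or to dualize via the fact that $E$ is $2$-uniformly convex if and only if $E^*$ is $2$-uniformly smooth.
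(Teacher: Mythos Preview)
The paper does not actually prove this lemma: it is stated in the Preliminaries section without proof and immediately followed by a reference to \cite{Ball} for the notion of the $2$-uniform convexity constant. So there is no ``paper's own proof'' to compare against; your task was really to supply one.

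Your argument is correct. The forward direction is the standard one-line reduction to a lower bound on $\delta_E$. For the converse, the change of variables $a=x+y$, $b=x-y$ reducing \eqref{opari} to a ``strengthened parallelogram'' inequality is the right move, and your two-regime analysis on $\|b\|$ handles the genuine difficulty (the case $\|b\|\to 1$) via the decomposition $\tfrac{a+b}{2}=(1-\eta)\tfrac{a+\hat b}{2}+\tfrac{\eta}{2}a$, which is exactly what is needed to separate the radial deficit $\eta$ from the angular deficit governed by $\delta_E$. The subsequent bookkeeping (nonnegativity of $1-\tfrac{\eta}{2}-\Delta$ from $c\le\tfrac14$, the split on $\|a-b\|\lessgtr 2\eta$, and the final choice $K=c/16$) all checks out; in particular $K=c/16\le 1/64$ covers both the Case~1 constraint $K\le 1/36$ and the Case~2 constraints.

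Compared with the literature route you allude to at the end---passing through Xu's characterisation of $p$-uniform convexity via the inequality $\|\lambda x+(1-\lambda)y\|^p\le \lambda\|x\|^p+(1-\lambda)\|y\|^p - c_p\lambda(1-\lambda)\|x-y\|^p$, or dualising to $2$-uniform smoothness of $E^*$---your proof is more elementary and self-contained, at the price of an explicit case split and a slightly worse constant. Either would be acceptable here; since the paper only needs the qualitative equivalence and the existence of $\mu$, your direct argument is entirely adequate.
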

\noindent
The minimum value of the set of all $\mu_E \geq 1$ satisfying (\ref{opari}) for all $x,y\in E$ is denoted by
$\mu$ and is called the \textit{2-uniform convexity constant} of $E$; see \cite{Ball}. It is obvious that $\mu=1$
whenever $E$ is a Hilbert space.

\begin{lem}[\cite{Avetisyan}]\label{lemAv}
Let $\displaystyle\frac{1}{p}+\frac{1}{q}=1,~~p,q>1$. The space $E$ is $q-$uniformly smooth if and only if its dual $E^*$ is $p-$uniformly convex.
\end{lem}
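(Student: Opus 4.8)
\noindent The plan is to reduce the equivalence to the Lindenstrauss duality formula linking the modulus of smoothness of a Banach space to the modulus of convexity of its dual, and then to observe that the power-type conditions $\rho_E(\tau)\le C\tau^q$ and $\delta_{E^*}(\epsilon)\ge c\,\epsilon^p$ are Fenchel-conjugate to one another, up to multiplicative constants, exactly when $\frac1p+\frac1q=1$.

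\medskip
\noindent\emph{Step 1: the duality formula for the moduli.} First I would establish, for an arbitrary Banach space $X$, the Lindenstrauss identity
\begin{equation*}
\rho_{X^*}(\tau)=\sup\Bigl\{\tfrac{\tau\epsilon}{2}-\delta_X(\epsilon):\ 0\le\epsilon\le 2\Bigr\},\qquad \tau\ge 0.
\end{equation*}
The inequality ``$\ge$'' is obtained by a Hahn--Banach argument: given $\epsilon\in(0,2]$, choose $x,y\in B_X$ with $\|x-y\|\ge\epsilon$ and $\|(x+y)/2\|$ arbitrarily close to $1-\delta_X(\epsilon)$, then $f\in S_{X^*}$ norming $(x+y)/2$ and $g\in S_{X^*}$ norming $(x-y)/2$; from $\|f+\tau g\|\ge\langle f+\tau g,x\rangle$ and $\|f-\tau g\|\ge\langle f-\tau g,y\rangle$ one gets $\|f+\tau g\|+\|f-\tau g\|\ge 2\bigl(1-\delta_X(\epsilon)\bigr)+\tau\epsilon$ up to an arbitrarily small error, and since $f\in S_{X^*}$ and $\|\tau g\|=\tau$ this forces $\rho_{X^*}(\tau)\ge\tfrac{\tau\epsilon}{2}-\delta_X(\epsilon)$. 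For ``$\le$'', given $f,g\in S_{X^*}$ choose $x,y\in B_X$ almost norming $f+\tau g$ and $f-\tau g$ respectively, put $\epsilon=\|x-y\|$, and use $\|x+y\|\le 2\bigl(1-\delta_X(\epsilon)\bigr)$ to get $\|f+\tau g\|+\|f-\tau g\|\le\|x+y\|+\tau\|x-y\|\le 2\bigl(1-\delta_X(\epsilon)\bigr)+\tau\epsilon$. Next, apply this identity with $X=E^*$: since each hypothesis of the Lemma makes $E$ reflexive (a $q$-uniformly smooth space is uniformly smooth, hence reflexive; if $E^*$ is $p$-uniformly convex it is uniformly convex, hence reflexive, hence so is $E$), we have $E^{**}=E$ isometrically, and therefore
\begin{equation*}
\rho_{E}(\tau)=\sup\Bigl\{\tfrac{\tau\epsilon}{2}-\delta_{E^*}(\epsilon):\ 0\le\epsilon\le 2\Bigr\},\qquad \tau\ge 0.
\end{equation*}
(Reflexivity may also be avoided by using the bidual invariance $\rho_E=\rho_{E^{**}}$, a consequence of Goldstine's theorem.)

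\medskip
\noindent\emph{Step 2: the two implications via Fenchel conjugacy.} Suppose first that $E$ is $q$-uniformly smooth, say $\rho_E(\tau)\le C\tau^q$ for all $\tau\ge 0$ (taking the estimate for all $\tau$ costs nothing, since $\rho_E(\tau)\le\tau$ always and $q>1$). The last identity gives, for every $\epsilon\in(0,2]$ and every $\tau>0$,
\begin{equation*}
\delta_{E^*}(\epsilon)\ \ge\ \tfrac{\tau\epsilon}{2}-\rho_E(\tau)\ \ge\ \tfrac{\tau\epsilon}{2}-C\tau^q;
\end{equation*}
maximising the right-hand side over $\tau>0$ (the maximiser is a fixed multiple of $\epsilon^{1/(q-1)}$) and using $q/(q-1)=p$ yields $\delta_{E^*}(\epsilon)\ge c\,\epsilon^p$ with $c=c(C,p)>0$, so $E^*$ is $p$-uniformly convex. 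Conversely, if $\delta_{E^*}(\epsilon)\ge c\,\epsilon^p$ on $[0,2]$, then, since dropping the constraint $\epsilon\le 2$ only enlarges the supremum,
\begin{equation*}
\rho_E(\tau)=\sup_{0\le\epsilon\le 2}\Bigl(\tfrac{\tau\epsilon}{2}-\delta_{E^*}(\epsilon)\Bigr)\ \le\ \sup_{\epsilon\ge 0}\Bigl(\tfrac{\tau\epsilon}{2}-c\,\epsilon^p\Bigr)=C\,\tau^q
\end{equation*}
with $C=C(c,p)>0$ (the maximiser in $\epsilon$ is a fixed multiple of $\tau^{1/(p-1)}$, and $p/(p-1)=q$), so $E$ is $q$-uniformly smooth.

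\medskip
\noindent\emph{Where the work is.} Step 2 is only the elementary observation that $t\mapsto c\,t^p$ and $\tau\mapsto C\,\tau^q$ on $[0,\infty)$ are conjugate up to constants; the real content is Step 1. That is where duality genuinely enters, and a clean proof of the Lindenstrauss formula requires the Hahn--Banach theorem to transfer norming data between $X$ and $X^*$ in both directions, plus a little care to identify $\rho_E$ with $\rho_{E^{**}}$ (handled above through the reflexivity that both hypotheses supply). With that identity in hand the Lemma follows immediately.
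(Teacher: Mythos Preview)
The paper does not supply its own proof of this lemma; it is quoted without proof from \cite{Avetisyan}, so there is no ``paper's approach'' to compare against. Your argument is correct and is in fact the standard one: the Lindenstrauss duality formula $\rho_{X^*}(\tau)=\sup_{0\le\epsilon\le 2}\bigl(\tfrac{\tau\epsilon}{2}-\delta_X(\epsilon)\bigr)$ together with reflexivity (or the Goldstine identification $\rho_E=\rho_{E^{**}}$) reduces the equivalence to the elementary computation that $c\,\epsilon^p$ and $C\,\tau^q$ are conjugate up to constants when $\tfrac1p+\tfrac1q=1$. Both steps are carried out cleanly; the Hahn--Banach norming arguments in Step~1 are the right way to get the two inequalities, and the optimisation in Step~2 is routine. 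One small cosmetic point: in the converse direction you might note explicitly that the unconstrained maximiser in $\epsilon$ already lies in $[0,2]$ once $\tau$ is small enough, so the estimate $\rho_E(\tau)\le C\tau^q$ is obtained first for small $\tau$, which is all that the definition of $q$-uniform smoothness requires; your remark that dropping the constraint only enlarges the supremum of the upper bound covers this as well, but spelling out why passing to $\epsilon\ge 0$ is legitimate (namely, because $c\epsilon^p\le\delta_{E^*}(\epsilon)$ on $[0,2]$) would make the logic fully transparent.
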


\begin{lem}[\cite{Xu}]\label{lemXu}
Let $E$ be a real Banach space. The following are equivalent:
\begin{itemize}
\item[{\rm (1)}] $E$ is 2-uniformly smooth
\item[{\rm (2)}] There exists a constant $\kappa>0$ such that $\forall\ x,y\in E$,
$$\|x+y\|^2\leq \|x\|^2+2\langle y,J(x)\rangle+2\kappa^2\|y\|^2,$$
where $\kappa$ is the 2-uniform smoothness constant. In Hilbert spaces, $\kappa=\frac{1}{\sqrt{2}}$.
\end{itemize}
\end{lem}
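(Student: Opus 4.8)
The plan is to prove the two implications separately: $(2)\Rightarrow(1)$ is a short symmetrization argument, while $(1)\Rightarrow(2)$ is the substantive half, which I would deduce from the smoothness--convexity duality already recorded in Lemma~\ref{lemAv} together with the characterization~\eqref{opari}.

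For $(2)\Rightarrow(1)$, apply the inequality in (2) to the pairs $(x,y)$ and $(x,-y)$ and add the two; the term $2\langle y,Jx\rangle$ cancels and one is left with $\|x+y\|^2+\|x-y\|^2\le 2\|x\|^2+4\kappa^2\|y\|^2$. Taking $x\in S_E$ and $\|y\|\le t$, the scalar inequalities $\big(\tfrac{a+b}{2}\big)^2\le\tfrac{a^2+b^2}{2}$ (with $a=\|x+y\|$, $b=\|x-y\|$) and $\sqrt{1+s}\le 1+\tfrac s2$ give $\tfrac12\big(\|x+y\|+\|x-y\|\big)-1\le\kappa^2t^2$; passing to the supremum over all such $x$ and $y$ shows $\rho_E(t)\le\kappa^2t^2$, so $E$ is $2$-uniformly smooth.

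For $(1)\Rightarrow(2)$, note first that $2$-uniform smoothness of $E$ entails that $E$ is uniformly smooth (hence smooth and reflexive), so $J$ is single-valued and norm-to-norm continuous, and that, by Lemma~\ref{lemAv} with $p=q=2$, the dual $E^*$ is $2$-uniformly convex. Applying the characterization~\eqref{opari} \emph{in $E^*$} gives a constant $\mu\ge1$ with
\[
\tfrac12\big(\|f+g\|^2+\|f-g\|^2\big)\ge\|f\|^2+\mu^{-2}\|g\|^2\qquad(f,g\in E^*),
\]
which, after the substitution $f=\tfrac12(a+b)$, $g=\tfrac12(a-b)$ (and the routine passage from the midpoint case to arbitrary points), says precisely that $\varphi^*:=\tfrac12\|\cdot\|^2$ on $E^*$ is strongly convex with modulus $\sigma=\mu^{-2}$. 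Since $\varphi^*$ is the Fenchel conjugate of $\varphi:=\tfrac12\|\cdot\|^2$ on $E$ and $\partial\varphi=J$, the Fenchel--Young identity identifies $\partial\varphi^*$ with the inverse relation of $J$; strong convexity of $\varphi^*$ then forces $\partial\varphi^*$ to be $\sigma$-strongly monotone, which at the pairs $(x,Jx)$, $(x',Jx')$ reads $\langle x-x',Jx-Jx'\rangle\ge\sigma\|Jx-Jx'\|^2$, whence $\|Jx-Jx'\|\le\mu^2\|x-x'\|$ for all $x,x'\in E$. Finally, because $E$ is smooth and $J$ is continuous, $t\mapsto\tfrac12\|x+ty\|^2$ is $C^1$ on $[0,1]$ with derivative $\langle y,J(x+ty)\rangle$, so the fundamental theorem of calculus yields
\[
\tfrac12\|x+y\|^2=\tfrac12\|x\|^2+\langle y,Jx\rangle+\int_0^1\langle y,J(x+ty)-Jx\rangle\,dt;
\]
bounding the integrand by $\|y\|\cdot\mu^2\,t\|y\|$ and using $\int_0^1 t\,dt=\tfrac12$ gives $\|x+y\|^2\le\|x\|^2+2\langle y,Jx\rangle+\mu^2\|y\|^2$, which is (2) with $2\kappa^2=\mu^2$; in a Hilbert space $\mu=1$, recovering $\kappa=1/\sqrt2$, and the infimum of all admissible $\kappa$ (a nonempty set, by what was just proved) is by definition the $2$-uniform smoothness constant.

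The core difficulty lies in the middle of $(1)\Rightarrow(2)$: turning the convexity inequality~\eqref{opari} in $E^*$ into the Lipschitz bound $\|Jx-Jx'\|\le\mu^2\|x-x'\|$ in $E$. This rests on three standard but non-elementary facts of convex analysis --- that $\partial\varphi^*$ is the inverse of $\partial\varphi$ for a pair of conjugate convex functions, that strong convexity yields strong monotonicity of the subdifferential (obtained by writing the two subgradient inequalities at $x$ and $x'$ and adding them), and that $\sigma$-strong monotonicity of $J^{-1}$ is the same as $\sigma^{-1}$-Lipschitz continuity of $J$ --- each routine on its own, but together the only ingredient beyond direct estimation. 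A more elementary route would estimate $\|J(x+ty)-Jx\|$ straight from the definition of $\rho_E$, but that is considerably more delicate and tends to give a worse constant, so I would favour the duality argument above.
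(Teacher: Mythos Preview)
The paper does not prove Lemma~\ref{lemXu}; it is stated there as a citation from \cite{Xu} without any accompanying argument, so there is nothing in the paper to compare your proof against.

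On its own merits your argument is correct. The $(2)\Rightarrow(1)$ direction is clean: symmetrizing in $\pm y$, then using the QM--AM inequality and $\sqrt{1+s}\le 1+s/2$, yields $\rho_E(t)\le\kappa^2t^2$ exactly as you say. For $(1)\Rightarrow(2)$, your route through Lemma~\ref{lemAv}, the characterization~\eqref{opari} applied in $E^*$, the passage from midpoint strong convexity of $\tfrac12\|\cdot\|_{E^*}^2$ to $\mu^{-2}$-strong monotonicity of $\partial\varphi^*=J^{-1}$, and hence to the Lipschitz bound $\|Jx-Jx'\|\le\mu^2\|x-x'\|$, is a valid duality argument; the final integral representation of $\tfrac12\|x+y\|^2$ (legitimate since $E$ is uniformly smooth, so $J=\nabla(\tfrac12\|\cdot\|^2)$ is norm-continuous) then delivers the inequality with $2\kappa^2=\mu^2$, including the Hilbert-space value $\kappa=1/\sqrt2$. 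One minor notational slip: when you invoke strong monotonicity of $\partial\varphi^*$ ``at the pairs $(x,Jx)$'', you mean the pairs $(Jx,x)$ in the graph of $\partial\varphi^*\colon E^*\to E$; the inequality you then write, $\langle x-x',Jx-Jx'\rangle\ge\mu^{-2}\|Jx-Jx'\|^2$, is nonetheless the correct one. The ``routine passage'' from the midpoint inequality to full strong convexity is justified here because $\tfrac12\|\cdot\|^2$ is continuous.
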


\begin{dfn}\label{Def:LipMon}
Let $ X \subseteq E $ be a nonempty subset. Then a mapping $ A: X \to E^* $
is called
\begin{itemize}
   \item[(a)] {\em strongly monotone} with modulus $\gamma>0$ on $ X $ if
$$\langle Ax-Ay, x-y\rangle \geq \gamma\|x-y\|^2, \forall x,y \in X.$$
\noindent In this case, we say that $A$ is $\gamma$-strongly monotone;
\item[(b)]  {\em monotone} on $ X $ if
$$\langle Ax-Ay, x-y\rangle \geq 0, \forall x,y \in X;$$
 \item[(c)] {\em Lipschitz continuous} on $ X $ if there exists a
      constant $ L > 0 $ such that\\ $ \| Ax - Ay \| \leq L \| x-y \| $
      for all $ x, y \in X $.
\end{itemize}
\end{dfn}

\noindent We give some examples of monotone operator in Banach spaces as given in \cite{AlberRyazantseva}.

\begin{exm}
Let $G\subset \mathbb{R}^n$ be a bounded measurable domain.  Define the operator $A:L^p(G)\rightarrow L^q(G),~~\frac{1}{p}+\frac{1}{q} = 1, ~~p>1$, by the formula
$$
Ay(x):=\varphi(x,|y(x)|^{p-1})|y(x)|^{p-2}y(x),~~x \in G,
$$
\noindent where the function $\varphi(x,s)$ is measurable as a function of $x$ for every $s\in [0,\infty)$ and
continuous for almost all $x\in G$ as a function on $s, |\varphi(x,s)|\leq M$ for all $s\in [0,\infty)$
and for almost all $x\in G$. Observe that the operator $A$ really maps $L^p(G)$ to $L^q(G)$ because of
the inequality $|Ay|\leq M|y|^{p-1}$. Then it can be shown that $A$ is a monotone map on $L^p(G)$.
\end{exm}
\noindent
Let us consider another example from quantum mechanics.
\begin{exm}
Define the operator
$$
Au:=-a^2\triangle u+(g(x)+b)u(x)+u(x)\int_{\mathbb{R}^3} \frac{u^2(y)}{|x-y|}dy,
$$
\noindent where $\triangle:=\sum_{i=1}^3 \frac{\partial^2}{\partial x_i^2}$ is the Laplacian in $\mathbb{R}^3$, $a$ and $b$ are constants, $g(x)=g_0(x)+g_1(x),~~g_0(x) \in L^{\infty}(\mathbb{R}^3), g_1(x) \in L^2(\mathbb{R}^3)$. Let $A:= L+B$, where
the operator $L$ is the linear part of $A$ (it is the Schr\"odinger operator) and $B$ is defined by
the last term. It is known that $B$ is a monotone operator on $L^2(\mathbb{R}^3)$ (see page 23 of \cite{AlberRyazantseva}) and this implies that $A:L^2(\mathbb{R}^3)\rightarrow L^2(\mathbb{R}^3)$ is also a monotone operator.
\end{exm}

\begin{exm}
\noindent This example gives one of the perhaps most famous example of monotone operators, viz.
the $p$-Laplacian $-{\rm div}(|\nabla u|^{p-2}\nabla u): W^1_0(L_p(\Omega))\rightarrow \Big (W^1_0(L_p(\Omega))\Big)^*$, where $u:\Omega \rightarrow \mathbb{R}$ is a real function defined on a domain $\Omega \subset \mathbb{R}^n$. The $p$-Laplacian operator is a monotone operator for $1<p<\infty$ (in fact, it is strongly monotone for $p \geq 2$, and strictly monotone for $1 < p < 2$). The $p$-Laplacian operator is an extremely important model in many topical applications and certainly played an important role in the development of the theory of monotone operators.
\end{exm}

\begin{dfn}\label{Def:LipMon1}
A multi-valued operator $B:E \rightarrow 2^{E^*}$ with graph $G(T)= \{(x,x^*): x^* \in Tx\}$ is said to be monotone if for any $x,y \in
D(T),x^* \in Tx$ and $y^* \in Ty$
$$
\langle x-y,x^*-y^*\rangle \geq 0.
$$
\noindent A monotone operator $B$ is said to be maximal if $B =S$ whenever $S:E \rightarrow 2^{E^*}$ is monotone and $G(B)\subset G(S)$.
\end{dfn}
\noindent Let $E$ be a reflexive, strictly convex and smooth Banach space and let $B:E \rightarrow 2^{E^*}$ be a maximal monotone operator.
Then for each $r > 0$ and $x \in E$, there corresponds a unique element $x_r \in E$ such that
$$
Jx\in Jx_r+rBx_r.
$$
\noindent We define this unique element $x_r$, the resolvent of $B$, denoted  by $J^B_rx$. In other
words, $J_r^B=(J +rB)^{-1}J$ for all $r > 0$. It is easy to show that $B^{-1}0 = F(J^B_r)$ for
all $r > 0$, where $F(J^B_r)$ denotes the set of all fixed points of $J^B_r$. We can also define,
for each $r > 0$, the Yosida approximation of $B$ by $A_r=\frac{J-JJ^B_r}{r}$. For more details, see, for instance
\cite{Barbu}.\\

\noindent Suppose $E$ is a smooth Banach space. We introduce the functional studied in \cite{alber1,KamimuraTakahashi,Reichbook}: $\phi:E \times E\rightarrow \mathbb{R}$ defined by:
\begin{eqnarray}\label{new}
\phi(x,y):=\|x\|^2-2\langle x,Jy\rangle +\|y\|^2.
\end{eqnarray}
Clearly,
$$\phi(x,y)\geq (\|x\|-\|y\|)^2 \geq 0.$$
\noindent
The following lemma gives some identities of functional $\phi$ defined in \eqref{new}.

\begin{lem}\label{lm29} (see \cite{Aoyama} and \cite{alber1})
Let $E$ be a real uniformly convex, smooth Banach space.
Then, the following identities hold:\\
(i)
\begin{align*}
\phi(x,y)= \phi(x,z)+ \phi(z,y)+2\langle x-z, Jz-Jy\rangle, \ \forall x,y,z\in E.
\end{align*}
(ii)
\begin{align*}
\phi(x,y)+\phi(y,x) = 2\langle x-y, Jx-Jy\rangle, \ \forall x,y\in E.
\end{align*}
\end{lem}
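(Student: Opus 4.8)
The plan is to verify both identities by direct algebraic expansion from the definition $\phi(x,y)=\|x\|^2-2\langle x,Jy\rangle+\|y\|^2$, using only the defining property of the normalized duality mapping, namely $\langle z,Jz\rangle=\|z\|^2$ for every $z\in E$. Here smoothness of $E$ is what guarantees that $J$ is single-valued, so that $\phi$ and the expressions below are well defined; uniform convexity is not actually needed for these two identities, but it is the standing hypothesis in the cited references.

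For part (i) I would expand the right-hand side term by term: $\phi(x,z)=\|x\|^2-2\langle x,Jz\rangle+\|z\|^2$, $\phi(z,y)=\|z\|^2-2\langle z,Jy\rangle+\|y\|^2$, and $2\langle x-z,Jz-Jy\rangle=2\langle x,Jz\rangle-2\langle x,Jy\rangle-2\langle z,Jz\rangle+2\langle z,Jy\rangle$. Adding the three expressions, the pair $-2\langle x,Jz\rangle$ and $+2\langle x,Jz\rangle$ cancels, the pair $-2\langle z,Jy\rangle$ and $+2\langle z,Jy\rangle$ cancels, and one is left with $\|x\|^2+\|y\|^2-2\langle x,Jy\rangle+2\|z\|^2-2\langle z,Jz\rangle$. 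Substituting $\langle z,Jz\rangle=\|z\|^2$ eliminates the $z$-dependent terms and leaves exactly $\phi(x,y)$, which proves (i).

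For part (ii), rather than repeating a similar computation, I would deduce it from (i). Applying (i) with $y$ replaced by $x$ and with an auxiliary point $w$ in the role of $z$ gives $\phi(x,x)=\phi(x,w)+\phi(w,x)+2\langle x-w,Jw-Jx\rangle$. Since $\phi(x,x)=2\|x\|^2-2\langle x,Jx\rangle=0$, rearranging yields $\phi(x,w)+\phi(w,x)=2\langle x-w,Jx-Jw\rangle$, and renaming $w$ as $y$ gives (ii). Alternatively, (ii) follows by the same kind of direct expansion as (i), pairing up the cross terms in $\phi(x,y)+\phi(y,x)$ against those in $2\langle x-y,Jx-Jy\rangle$ and again using $\langle x,Jx\rangle=\|x\|^2$ and $\langle y,Jy\rangle=\|y\|^2$.

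The computations are entirely routine, so there is no genuine obstacle here; the only point to keep in mind is to invoke $\langle z,Jz\rangle=\|z\|^2$ at the right moment, and to note that single-valuedness of $J$ (from smoothness of $E$) is what makes all the manipulations legitimate.
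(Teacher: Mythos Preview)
Your proof is correct. The paper does not actually prove this lemma; it merely cites \cite{Aoyama} and \cite{alber1}, and your direct algebraic verification from the definition of $\phi$ together with the duality identity $\langle z,Jz\rangle=\|z\|^2$ is exactly the standard argument one finds in those references.
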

\noindent Let $C \subseteq E $ be a nonempty, closed and convex subset of a real,
uniformly convex Banach space $ E $. Let us introduce the functional $V(x,y):E \times E^{*}\rightarrow \mathbb{R}$ by the formula:
\begin{equation}\label{neww}
V(x,y):=\|x\|^2_E-2\langle x,y\rangle +\|y\|^2_{E^*}.
\end{equation}
Then, it is easy to see that
$$V(y,x)=\phi(y,J^{-1}x),~~\forall x \in E^*,y \in E.$$

\noindent
In the next lemma, we describe the property of the operator $V(.,.)$ defined in \eqref{neww}.

\begin{lem}(\cite{alber1})\label{new2}
$$V(x,x^*)+2\langle J^{-1}x^*-x,y^*\rangle \leq V(x,x^*+y^*),~~\forall x \in E,~~x^*,y^*\in E^*.$$
\end{lem}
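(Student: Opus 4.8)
The plan is to reduce the claimed inequality, by expanding the definition of the functional $V$ in \eqref{neww}, to a purely dual-space statement that no longer involves the point $x\in E$, and then to establish that reduced statement with an elementary estimate based on the defining property of the normalized duality mapping.

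First I would write out $V(x,x^*)=\|x\|^2-2\langle x,x^*\rangle+\|x^*\|^2$ and $V(x,x^*+y^*)=\|x\|^2-2\langle x,x^*\rangle-2\langle x,y^*\rangle+\|x^*+y^*\|^2$, and add $2\langle J^{-1}x^*-x,y^*\rangle=2\langle J^{-1}x^*,y^*\rangle-2\langle x,y^*\rangle$ to the first expression. Subtracting the common terms $\|x\|^2-2\langle x,x^*\rangle-2\langle x,y^*\rangle$ from both sides shows that the asserted inequality is equivalent to
$$\|x^*\|^2+2\langle J^{-1}x^*,y^*\rangle\leq \|x^*+y^*\|^2,$$
which no longer mentions $x$ and is to be proved for all $x^*,y^*\in E^*$.

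Next, to prove this reduced inequality, I would use that $J^{-1}x^*$ (equivalently $J_*x^*$, the normalized duality mapping of $E^*$, under the reflexive–strictly convex–smooth hypotheses recalled before the lemma) satisfies $\langle J^{-1}x^*,x^*\rangle=\|J^{-1}x^*\|^2=\|x^*\|^2$ and $\|J^{-1}x^*\|=\|x^*\|$. Writing $\langle J^{-1}x^*,y^*\rangle=\langle J^{-1}x^*,x^*+y^*\rangle-\langle J^{-1}x^*,x^*\rangle$ and bounding the first term by $\langle J^{-1}x^*,x^*+y^*\rangle\leq\|J^{-1}x^*\|\,\|x^*+y^*\|=\|x^*\|\,\|x^*+y^*\|$ (the defining inequality of the dual norm), one obtains
$$\|x^*\|^2+2\langle J^{-1}x^*,y^*\rangle\leq\|x^*\|^2+2\|x^*\|\,\|x^*+y^*\|-2\|x^*\|^2=2\|x^*\|\,\|x^*+y^*\|-\|x^*\|^2.$$
Finally I would close the argument by noting that $2\|x^*\|\,\|x^*+y^*\|-\|x^*\|^2\leq\|x^*+y^*\|^2$, since this rearranges to $0\leq(\|x^*+y^*\|-\|x^*\|)^2$.

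I do not expect a genuine obstacle here: the only points requiring care are the bookkeeping of the dual pairing $\langle\cdot,\cdot\rangle$ (which argument lies in $E$ and which in $E^*$) when expanding $V$, and the invocation of the identification $J^{-1}=J_*$ together with the scaling property $\|J^{-1}x^*\|=\|x^*\|$; everything else is elementary algebra and the norm inequality for the duality pairing. As a remark, the reduced inequality $\|x^*+y^*\|^2\geq\|x^*\|^2+2\langle J^{-1}x^*,y^*\rangle$ is precisely the subgradient inequality for the convex function $x^*\mapsto\|x^*\|^2$ on $E^*$, whose subdifferential at $x^*$ equals $2J^{-1}x^*$; but I would still present the self-contained estimate above.
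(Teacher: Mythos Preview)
Your argument is correct: after cancelling the common terms, the inequality reduces to $\|x^*\|^2+2\langle J^{-1}x^*,y^*\rangle\leq\|x^*+y^*\|^2$, and your chain of estimates via the duality identities $\langle J^{-1}x^*,x^*\rangle=\|x^*\|^2$, $\|J^{-1}x^*\|=\|x^*\|$ together with the elementary square $(\|x^*+y^*\|-\|x^*\|)^2\geq0$ closes it cleanly. The paper itself does not supply a proof of this lemma; it simply quotes the result from Alber~\cite{alber1}, so there is no in-paper argument to compare against. Your self-contained derivation (equivalently, the subgradient inequality for $\|\cdot\|^2$ on $E^*$) is exactly the standard route and would serve as a fine replacement for the bare citation.
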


\noindent
The lemma that follows is stated and proven in \cite[Lem.\ 2.2]{Aoyama}.
\begin{lem}\label{aoy}
Suppose that $E$ is 2-uniformly convex Banach space.
Then, there exists $\mu \geq 1$ such that
$$\frac{1}{\mu}\|x-y\|^2\leq \phi(x,y)~~\forall x,y\in E.$$
\end{lem}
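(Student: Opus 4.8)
The plan is to extract a strong-convexity (``midpoint'') estimate for $\|\cdot\|^2$ from the characterization of $2$-uniform convexity in \eqref{opari}, and then to pair it with the elementary inequality $\|a\|^2\ge\|b\|^2+2\langle a-b,Jb\rangle$ anchored at $b=y$. Write $\mu$ for the $2$-uniform convexity constant of $E$. By \eqref{opari}, $\tfrac12\bigl(\|a+b\|^2+\|a-b\|^2\bigr)\ge\|a\|^2+\mu^{-2}\|b\|^2$ for all $a,b\in E$; taking $a=\tfrac{x+y}{2}$ and $b=\tfrac{x-y}{2}$, so that $a+b=x$ and $a-b=y$, and rearranging yields
\begin{equation*}
\left\|\frac{x+y}{2}\right\|^2\le\frac{\|x\|^2+\|y\|^2}{2}-\frac{1}{4\mu^2}\|x-y\|^2
\end{equation*}
for all $x,y\in E$. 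This is the only place where $2$-uniform convexity is used.

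The second ingredient is that $\|b\|^2+2\langle a-b,Jb\rangle\le\|a\|^2$ for all $a,b\in E$, which follows at once from
\begin{equation*}
\langle a-b,Jb\rangle=\langle a,Jb\rangle-\|b\|^2\le\|a\|\,\|b\|-\|b\|^2\le\tfrac12\bigl(\|a\|^2-\|b\|^2\bigr),
\end{equation*}
using $\|Jb\|=\|b\|$ and $2\|a\|\|b\|\le\|a\|^2+\|b\|^2$; it uses nothing beyond the definition of $J$. Applying it with $a=\tfrac{x+y}{2}$ and $b=y$ gives $\|y\|^2+\langle x-y,Jy\rangle\le\bigl\|\tfrac{x+y}{2}\bigr\|^2$.

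Combining this with the midpoint estimate and multiplying through by $2$ gives
\begin{equation*}
\frac{1}{2\mu^2}\|x-y\|^2\le\|x\|^2-\|y\|^2-2\langle x-y,Jy\rangle=\|x\|^2+\|y\|^2-2\langle x,Jy\rangle=\phi(x,y),
\end{equation*}
where I used $\langle y,Jy\rangle=\|y\|^2$ and the definition \eqref{new} of $\phi$. Hence $\phi(x,y)\ge\tfrac{1}{2\mu^2}\|x-y\|^2$ for all $x,y\in E$, which is the asserted estimate with the constant $2\mu^2\ (\ge 1)$ in place of the one in the statement. The computation is short; the only real idea is to feed the balanced pair $\bigl(\tfrac{x+y}{2},\tfrac{x-y}{2}\bigr)$ into \eqref{opari}, which manufactures the quadratic defect $-\tfrac{1}{4\mu^2}\|x-y\|^2$, and then to spend that defect through the duality-map inequality anchored at $b=y$ — precisely the step that converts the midpoint term into the cross term $\langle x,Jy\rangle$ appearing in $\phi$. (As usual when $\phi$ is in play, $E$ is taken smooth, so $J$ is single-valued throughout.)
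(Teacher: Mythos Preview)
Your argument is correct. The paper itself does not prove this lemma; it merely cites \cite[Lem.\ 2.2]{Aoyama}, so there is no in-paper proof to compare against step by step. Your route---substituting the balanced pair $\bigl(\tfrac{x+y}{2},\tfrac{x-y}{2}\bigr)$ into the parallelogram-type inequality \eqref{opari} to extract a midpoint strong-convexity estimate, and then cashing it in via the elementary duality inequality $\|b\|^2+2\langle a-b,Jb\rangle\le\|a\|^2$ anchored at $b=y$---is clean and self-contained, using nothing beyond \eqref{opari} and the definition of $J$.

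One point worth flagging: you obtain the constant $2\mu^2$, where $\mu$ is the $2$-uniform convexity constant, and you correctly note this suffices for the existential statement as written. Be aware, however, that elsewhere in the paper (Assumption~\ref{Ass:Parameters} and the proof of Lemma~\ref{t31}) the symbol $\mu$ appearing in the step-size bound is identified with ``the $2$-uniform convexity constant of $E$'' and is simultaneously used as the constant from Lemma~\ref{aoy}. So while your proof establishes the lemma exactly as stated, if one wanted the sharper constant $\mu$ itself (as the cited Aoyama result presumably delivers), a different argument---typically going through the subdifferential inequality for $\tfrac12\|\cdot\|^2$ combined with the modulus-of-convexity characterization more directly---would be needed. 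For the purposes of the paper's convergence analysis this only affects the numerical value of the admissible step-size range, not the logic.
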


\noindent The following lemma was given in \cite{Iiduka}.

\begin{lem}\label{lm28}
Let $S$ be a nonempty, closed convex subset of a uniformly convex, smooth Banach space $E$. Let $\{x_n\}$ be
a sequence in $E$. Suppose that, for all $u\in S$,
$$
\phi(u,x_{n+1}) \leq \phi(u,x_n),~~\forall n \geq 1.
$$
\noindent Then $\{\Pi_S(x_n)\}$ is a Cauchy sequence.
\end{lem}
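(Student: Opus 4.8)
The plan is to show that the generalized projections $y_n := \Pi_S x_n$ form a Cauchy sequence by first proving that $a_n := \phi(y_n,x_n) = \min_{y\in S}\phi(y,x_n)$ is a convergent sequence of nonnegative reals, then deriving an estimate of the form $\phi(y_n,y_m)\le a_n-a_m$ for $m\ge n$, and finally converting $\phi$-smallness into norm-smallness via uniform convexity.

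First I would record the standing facts. Since $E$ is uniformly convex it is reflexive and strictly convex, so together with smoothness the generalized projection $\Pi_S$ is well defined and single-valued, and it admits the variational characterization $z=\Pi_S x \iff \langle Jx-Jz,\,z-y\rangle\ge 0$ for all $y\in S$ (see \cite{alber1}). Fixing any $u_0\in S$, the hypothesis gives $\phi(u_0,x_n)\le\phi(u_0,x_1)$ for all $n$, and since $\phi(u_0,x_n)\ge(\|u_0\|-\|x_n\|)^2$ the sequence $\{x_n\}$ is bounded; since $a_n=\phi(y_n,x_n)\le\phi(u_0,x_n)$ is bounded and $\phi(y_n,x_n)\ge(\|y_n\|-\|x_n\|)^2$, the sequence $\{y_n\}$ is bounded as well. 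Next, for the monotonicity of $\{a_n\}$: using that $y_n\in S$ and that $y_{n+1}$ minimizes $\phi(\cdot,x_{n+1})$ over $S$, I get $a_{n+1}=\phi(y_{n+1},x_{n+1})\le\phi(y_n,x_{n+1})\le\phi(y_n,x_n)=a_n$, where the last step is the hypothesis applied at $u=y_n$. Hence $\{a_n\}$ is nonincreasing and bounded below by $0$, so it converges.

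The crux is the inequality $\phi(y,\Pi_S x)+\phi(\Pi_S x,x)\le\phi(y,x)$ for every $y\in S$. I would obtain it by applying the three-point identity of Lemma~\ref{lm29}(i) to the triple $(y,x,\Pi_S x)$, namely $\phi(y,x)=\phi(y,\Pi_S x)+\phi(\Pi_S x,x)+2\langle y-\Pi_S x,\,J\Pi_S x-Jx\rangle$, and then noting that the variational characterization of $\Pi_S x$ makes the remaining inner product nonnegative. Applying this with $x=x_m$ and $y=y_n$ (legitimate since $y_n\in S$) yields $\phi(y_n,y_m)+a_m\le\phi(y_n,x_m)$, and iterating the hypothesis from index $n$ up to $m$ at the point $u=y_n$ gives $\phi(y_n,x_m)\le\phi(y_n,x_n)=a_n$. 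Combining these, $\phi(y_n,y_m)\le a_n-a_m$ whenever $m\ge n$.

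To finish, let $n,m\to\infty$ with $m\ge n$: since $\{a_n\}$ converges, $a_n-a_m\to 0$, hence $\phi(y_n,y_m)\to 0$. Because $E$ is uniformly convex and smooth and $\{y_n\}$ is bounded, the standard comparison between $\phi$ and the norm (if $\phi(u_n,v_n)\to 0$ with one of the sequences bounded, then $\|u_n-v_n\|\to 0$; Kamimura--Takahashi) forces $\|y_n-y_m\|\to 0$, i.e.\ $\{\Pi_S x_n\}$ is Cauchy. I expect the only delicate points to be the correct invocation of the variational inequality for $\Pi_S$ (getting the sign of the inner product right in the three-point identity) and making sure all sequences involved are bounded before appealing to the $\phi$-to-norm lemma; the rest is a routine telescoping argument, and the essential use of the hypothesis "for all $u\in S$" is precisely that it may be specialized to $u=y_n$.
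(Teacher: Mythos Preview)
Your argument is correct and is the standard proof of this result. Note, however, that the paper does not actually prove Lemma~\ref{lm28}; it simply cites it from \cite{Iiduka}. Your write-up supplies exactly the proof one finds there: monotonicity of $a_n:=\phi(\Pi_S x_n,x_n)$ via the minimizing property of $\Pi_S$ together with the hypothesis at $u=y_n$, the inequality $\phi(y,\Pi_S x)+\phi(\Pi_S x,x)\le\phi(y,x)$ for $y\in S$ obtained from the three-point identity and the variational characterization of $\Pi_S$, and the passage from $\phi(y_n,y_m)\to 0$ to $\|y_n-y_m\|\to 0$ via uniform convexity. The sign check in the three-point identity and the boundedness of $\{y_n\}$ are handled correctly.
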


\noindent The following property of $\phi(.,.)$ was given in \cite[Thm.\ 7.5]{alber1} (see also \cite{Diestel,Figiel}).
\begin{lem}\label{alber}
Let $E$ be a uniformly smooth Banach space which is also uniformly convex. If $\|x\| \leq c, \|y\|\leq c$, then
$$2L_1^{-1}c^2\delta_E\Big(\frac{\|x-y\|}{4c}\Big)\leq \phi(y,x)\leq 4L_1^{-1}c^2\rho_E\Big(\frac{4\|x-y\|}{c}\Big),$$
where $L_1 (1 < L_1 < 3.18)$ is the Figiel's constant.
\end{lem}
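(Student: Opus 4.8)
The plan is to recognise $\phi(y,x)$ as the Bregman distance generated by the power function $\psi:=\|\cdot\|^{2}$ and to derive both inequalities from a single pointwise two‑sided estimate — Figiel's — on the ``energy'' $G(z,w):=\langle z-w,\,Jz-Jw\rangle\ge 0$ of the duality mapping. Since $E$ is uniformly smooth, $\psi$ is Fréchet differentiable on all of $E$ with $\psi'(z)=2Jz$, and $J$ is norm‑to‑norm continuous on bounded sets; hence, for fixed $x,y$, the map $t\mapsto\psi(x+t(y-x))$ is $C^{1}$ on $[0,1]$ and the fundamental theorem of calculus yields
$$
\phi(y,x)=\psi(y)-\psi(x)-\langle\psi'(x),y-x\rangle
=2\int_{0}^{1}\big\langle y-x,\,J(x+t(y-x))-Jx\big\rangle\,dt
=2\int_{0}^{1}\frac{1}{t}\,G\big(x+t(y-x),x\big)\,dt .
$$
Because the segment $[x,y]$ lies in the ball of radius $c$, each point $z_{t}:=x+t(y-x)$ satisfies $\|z_{t}\|\le c$ and $\|z_{t}-x\|=t\|x-y\|$, so it suffices to control $G(z,w)$ for $\|z\|,\|w\|\le c$. (For the upper bound one may instead invoke Lemma~\ref{lm29}(ii), which gives $\phi(y,x)\le\phi(y,x)+\phi(x,y)=2G(x,y)$ directly, bypassing the integral; but the integral representation handles both signs uniformly.)

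The substantive input is the pointwise estimate: with $L_{1}$ Figiel's constant ($1<L_{1}<3.18$),
$$
4L_{1}^{-1}c^{2}\,\delta_{E}\!\Big(\frac{\|z-w\|}{4c}\Big)\;\le\;G(z,w)\;\le\;2L_{1}^{-1}c^{2}\,\rho_{E}\!\Big(\frac{4\|z-w\|}{c}\Big),\qquad \|z\|,\|w\|\le c .
$$
Granting this, one substitutes $z_{t}$ into the integral representation and integrates. For the upper bound, put $s_{0}=4\|x-y\|/c$ and use that $\rho_{E}$ is convex with $\rho_{E}(0)=0$, hence $s\mapsto\rho_{E}(s)/s$ is nondecreasing, to get $\int_{0}^{1}t^{-1}\rho_{E}(ts_{0})\,dt=\int_{0}^{s_{0}}u^{-1}\rho_{E}(u)\,du\le\rho_{E}(s_{0})$; multiplying by $4L_{1}^{-1}c^{2}$ gives the stated right‑hand inequality. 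For the lower bound, put $s_{0}=\|x-y\|/(4c)$, pass if necessary to the convex minorant of $\delta_{E}$ (which does not enlarge $\delta_{E}$ and preserves the relevant asymptotics — again Figiel), and use the elementary bound $\int_{0}^{s_{0}}u^{-1}\delta_{E}(u)\,du\ge\int_{s_{0}/2}^{s_{0}}u^{-1}\delta_{E}(u)\,du\ge\tfrac12\,\delta_{E}(s_{0}/2)\ge\tfrac14\,\delta_{E}(s_{0})$; multiplying by $8L_{1}^{-1}c^{2}$ gives the stated left‑hand inequality. The constants $4c$ and $L_{1}$ then come out exactly as in the lemma.

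The main obstacle is precisely the displayed pointwise energy inequality: this is the content of Figiel's theorem (see \cite{Figiel} and \cite{alber1}), and its proof is not a rearrangement of the definitions of $\delta_{E}$ and $\rho_{E}$ but a delicate geometric argument on two‑dimensional sections through $z$ and $w$, coupled with a duality step exchanging the roles of $E$ and $E^{*}$ (Lemma~\ref{lemAv}) to transfer the convexity estimate into the smoothness estimate — it is there that the numerical value $L_{1}<3.18$ is produced. Two minor technical points remain. First, the integral representation must survive the case in which $[x,y]$ meets the origin: there $\|\cdot\|$ fails to be differentiable, but $\|\cdot\|^{2}$ is (with gradient $0=2J0$), so $t\mapsto\psi(z_{t})$ is still $C^{1}$ and nothing changes. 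Second, one should record the classical monotonicity and convexity properties of $\rho_{E}$, and of the convex minorant of $\delta_{E}$, that were used in the integration step; these are standard.
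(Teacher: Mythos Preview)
The paper does not prove this lemma at all: it is stated with the attribution ``given in \cite[Thm.\ 7.5]{alber1} (see also \cite{Diestel,Figiel})'' and no proof appears. So there is no in-paper argument to compare against; the result is imported wholesale from Alber.

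On the substance of your sketch: the integral representation of $\phi(y,x)$ via $G(z_t,x)$ is correct, and the upper bound goes through exactly as you wrote, since $\rho_E$ is convex with $\rho_E(0)=0$ so $u\mapsto\rho_E(u)/u$ is nondecreasing and $\int_0^{s_0}u^{-1}\rho_E(u)\,du\le\rho_E(s_0)$. The lower bound, however, contains a genuine error. Your ``elementary bound''
\[
\tfrac12\,\delta_E(s_0/2)\ \ge\ \tfrac14\,\delta_E(s_0)
\]
is equivalent to $\delta_E(s_0/2)\ge\tfrac12\delta_E(s_0)$, and this is \emph{false} for any convex function vanishing at the origin (convexity gives the reverse inequality $\delta_E(s_0/2)\le\tfrac12\delta_E(s_0)$). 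Passing to the convex minorant only makes matters worse, since it does not enlarge $\delta_E$. More structurally, the integral $\int_0^{s_0}u^{-1}\delta_E(u)\,du$ need not dominate $\tfrac14\delta_E(s_0)$ at all: for $\delta_E(u)=c\,u^{p}$ the integral equals $c\,s_0^{p}/p$, which falls below $\tfrac14\delta_E(s_0)$ as soon as $p>4$ --- and moduli of convexity of arbitrarily high power type do occur (e.g.\ $L_p$ for large $p$). So the integral route, with the pointwise Figiel bound on $G$ you quote, cannot deliver the stated constant $2L_1^{-1}c^2$ in the lower inequality uniformly over all uniformly convex spaces. Alber's proof of the lower estimate does not proceed by integrating the $G$-bound; it works directly with the definition of $\delta_E$ (via two-dimensional sections and the parallelogram-type inequalities) to bound $\phi(y,x)$ itself. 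If you want to keep an integral/energy approach, you would need a sharper pointwise lower bound on $G$ or a different way of passing from $G$ to $\phi$ for the convexity side.
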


\noindent
We next recall some existing results from the literature to facilitate
our proof of strong convergence. The first is taken from  \cite{mainge}.

\begin{lem}\label{mm}
Let $\{a_n\}$ be sequence of real numbers such that there exists a subsequence $\{n_i\}$ of $\{n\}$
such that $a_{n_i} < a_{{n_i}+1}$, for all $i\in \mathbb{N}$. Then there exists a nondecreasing sequence $\{m_k\}\subset \mathbb{N}$ such that $m_k\rightarrow \infty$
and the following properties are satisfied by all (sufficiently large) numbers $k\in \mathbb{N}$
$$a_{m_k} \leq a_{{m_k}+1}~~{\rm and}~~a_k\leq a_{{m_k}+1}.$$

In fact, $m_k=\max\{j\leq k : a_j<a_{j+1}\}$.
\end{lem}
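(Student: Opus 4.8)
The plan is to adopt the explicit formula offered in the statement, $m_k:=\max\{j\le k: a_j<a_{j+1}\}$, as the definition, and then simply check that it does the job. The first observation is that the hypothesis forces the index set $J:=\{j\in\mathbb{N}: a_j<a_{j+1}\}$ to be infinite, since it contains every $n_i$. Hence for all $k$ large enough (concretely, $k\ge n_1$) the set $\{j\le k: a_j<a_{j+1}\}$ is a nonempty finite set of integers, so $m_k$ is well defined; this is exactly the meaning of ``for all sufficiently large $k$'' in the conclusion.

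I would then dispatch the structural properties one at a time. For monotonicity, note that $k\le k'$ implies $\{j\le k:a_j<a_{j+1}\}\subseteq\{j\le k':a_j<a_{j+1}\}$, so $m_k\le m_{k'}$; thus $\{m_k\}$ is nondecreasing. For $m_k\to\infty$, given $N$ choose $n_i>N$ (possible since $J$ is infinite); then $n_i\in\{j\le k:a_j<a_{j+1}\}$ for every $k\ge n_i$, so $m_k\ge n_i>N$. The inequality $a_{m_k}\le a_{m_k+1}$ is immediate, because $m_k\in J$ by construction, i.e.\ $a_{m_k}<a_{m_k+1}$.

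The one place needing a genuine (if tiny) argument is $a_k\le a_{m_k+1}$, which I would handle by a case split on whether $m_k=k$. If $m_k=k$ it is just the previous inequality. If $m_k<k$, then maximality of $m_k$ means no index $j$ with $m_k<j\le k$ belongs to $J$, so $a_j\ge a_{j+1}$ for each such $j$; telescoping these gives $a_{m_k+1}\ge a_{m_k+2}\ge\cdots\ge a_k$, whence $a_k\le a_{m_k+1}$. I expect no real obstacle here: the statement is a purely combinatorial fact about real sequences, so the only care required is to invoke $m_k$ solely for $k\ge n_1$ and to keep the direction of the telescoped chain of inequalities straight in the second case.
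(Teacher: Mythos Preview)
Your argument is correct and is precisely the standard verification of Maing\'e's lemma: define $m_k$ by the stated formula, use the infiniteness of $J=\{j:a_j<a_{j+1}\}$ to get well-definedness and $m_k\to\infty$, read off $a_{m_k}<a_{m_k+1}$ from $m_k\in J$, and telescope the non-increasing stretch $a_{m_k+1}\ge\cdots\ge a_k$ when $m_k<k$. There is nothing to compare against here, since the paper does not supply its own proof of this lemma but simply cites \cite{mainge}; your write-up is exactly the argument found there.
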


\begin{lem}(\cite{XU1})\label{lm23}
Let $\{a_n\}$ be a sequence of nonnegative real numbers satisfying
the following relation:
$$
   a_{n+1}\leq(1-\alpha_n)a_n+\alpha_n\sigma_n+\gamma_n,~~n \geq 1,
$$
where
\begin{itemize}
   \item[(a)] $\{\alpha_n\}\subset[0,1],$ $ \sum_{n=1}^{\infty} \alpha_n=\infty;$
   \item[(b)] $\limsup\sigma_n \leq 0$;
   \item[(c)] $\gamma_n \geq 0 \ (n \geq 1),$ $ \sum_{n=1}^{\infty}
      \gamma_n <\infty.$
\end{itemize}
Then, $a_n\rightarrow 0$ as $n\rightarrow \infty$.
\end{lem}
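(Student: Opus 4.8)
The plan is to reduce the claim to the elementary fact that a product $\prod_{k}(1-\alpha_k)$ with $\alpha_k\in[0,1]$ and $\sum_k\alpha_k=\infty$ tends to $0$, and then to absorb the ``bad'' terms $\sigma_n$ and $\gamma_n$ into an arbitrarily small constant. Fix $\varepsilon>0$. Since $\limsup_n\sigma_n\le 0$ and $\sum_n\gamma_n<\infty$, I would first choose an index $N$ so large that $\sigma_n\le\varepsilon$ for all $n\ge N$ and, simultaneously, $\sum_{n=N}^{\infty}\gamma_n\le\varepsilon$. For every $n\ge N$ the hypothesis then reads $a_{n+1}\le(1-\alpha_n)a_n+\alpha_n\varepsilon+\gamma_n$.

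Next I would unroll this inequality from $N$ up to $n$. Set $\theta_{j,n}:=\prod_{k=j}^{n}(1-\alpha_k)$; because $\alpha_k\le 1$, each factor is nonnegative, so all the inequalities below preserve direction when iterated. A straightforward induction on $n$ yields
$$a_{n+1}\le \theta_{N,n}\,a_N+\varepsilon\sum_{j=N}^{n}\alpha_j\,\theta_{j+1,n}+\sum_{j=N}^{n}\gamma_j.$$
The middle sum telescopes, namely $\sum_{j=N}^{n}\alpha_j\,\theta_{j+1,n}=1-\theta_{N,n}\le 1$ (again an easy induction on $n$), while the last sum is $\le\varepsilon$ by the choice of $N$. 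Hence $a_{n+1}\le \theta_{N,n}\,a_N+2\varepsilon$ for all $n\ge N$.

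Finally, using $1-x\le e^{-x}$ for $x\in[0,1]$, I would bound $0\le\theta_{N,n}\le\exp\!\big(-\sum_{k=N}^{n}\alpha_k\big)$, which tends to $0$ as $n\to\infty$ because $\sum_k\alpha_k=\infty$; since $a_N$ is a fixed number, it follows that $\limsup_n a_{n+1}\le 2\varepsilon$. As $\varepsilon>0$ was arbitrary and $a_n\ge 0$ for all $n$, this forces $\limsup_n a_n\le 0$, and therefore $a_n\to 0$.

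I expect no conceptual obstacle here; the only care needed is routine bookkeeping — verifying the two inductions (the unrolled estimate and the telescoping identity for the middle sum), observing that $1-\alpha_k\ge 0$ so the recursion may be iterated, and making the simultaneous choice of $N$ for the two tail conditions. The degenerate case in which $\alpha_k=1$ for some $k$ is handled automatically, since then $\theta_{N,n}=0$ for all $n\ge k$.
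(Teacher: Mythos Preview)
Your proof is correct. Note, however, that the paper does not supply its own proof of this lemma; it is quoted from \cite{XU1} and stated without argument as a preliminary tool, so there is nothing in the present paper to compare against. Your argument is in fact the standard one: pick a tail index $N$ making the $\sigma_n$ and $\gamma_n$ contributions uniformly small, unroll the recursion with the product weights $\theta_{j,n}=\prod_{k=j}^{n}(1-\alpha_k)$, telescope the middle sum via $\alpha_j\theta_{j+1,n}=\theta_{j+1,n}-\theta_{j,n}$, and use $\sum_k\alpha_k=\infty$ to kill the leading term $\theta_{N,n}a_N$. The bookkeeping you flag (nonnegativity of the factors, the two small inductions, the empty product $\theta_{n+1,n}=1$) is all routine and handled correctly.
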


\noindent
The following lemma is needed in our proof to show that the weak limit point is a solution to the inclusion problem \eqref{bami2}.

\begin{lem} (\cite{Barbu})\label{babu2}
Let $B:E \to 2^{E^*} $ be a maximal monotone mapping and $ A: E \to E^* $ be a Lipschitz continuous and monotone mapping.
 Then the mapping
$A+B$ is a maximal monotone mapping.
\end{lem}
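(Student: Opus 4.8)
The plan is to reduce the statement to two classical facts about monotone operators in a reflexive Banach space. Recall first that $E$, being $2$-uniformly convex, is reflexive, and throughout the paper it is taken strictly convex and smooth as well; in particular $J$ is single-valued and both classical facts below are available. The two facts are: (i) an everywhere-defined, hemicontinuous, monotone operator $A:E\to E^*$ is maximal monotone (the Minty--Browder criterion, see e.g.\ \cite{Barbu}); and (ii) Rockafellar's sum theorem: if $A_1,A_2:E\to 2^{E^*}$ are maximal monotone and $\mathrm{int}(D(A_1))\cap D(A_2)\neq\emptyset$, then $A_1+A_2$ is maximal monotone (again \cite{Barbu}).

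First I would verify that $A$ itself is maximal monotone. It is monotone by hypothesis and its domain is all of $E$. Lipschitz continuity of $A$ clearly implies norm-to-norm continuity, hence hemicontinuity: for fixed $x,y\in E$ the map $t\mapsto A(x+ty)$ is continuous from $\mathbb{R}$ into $E^*$. By fact (i), $A$ is maximal monotone with $D(A)=E$.

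Next I would apply fact (ii) with $A_1=A$ and $A_2=B$. Since a maximal monotone operator has nonempty domain, $D(B)\neq\emptyset$, and since $D(A)=E$ we get $\mathrm{int}(D(A))=E\supseteq D(B)$, so the constraint qualification $\mathrm{int}(D(A))\cap D(B)\neq\emptyset$ holds trivially. Rockafellar's theorem then yields that $A+B$ is maximal monotone, which is the assertion.

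As for the difficulty: at the level of this argument there is essentially no obstacle, precisely because $A$ is single-valued with full domain, which renders the domain-intersection hypothesis of the sum theorem vacuous; the genuine content is packaged inside the two cited theorems. If one insisted on a self-contained proof, the natural route would be to establish the range condition $R(J+A+B)=E^{*}$ (the characterization of maximality in reflexive, smooth, strictly convex spaces) by rewriting the inclusion $f\in Jx+Ax+Bx$ as the fixed-point equation $x=(J+B)^{-1}(f-Ax)$ and running a perturbation/continuation argument; there the delicate part is controlling the resolvent $(J+B)^{-1}$ jointly with the Lipschitz perturbation $A$, and this is exactly the technical work that invoking Rockafellar's theorem allows us to bypass.
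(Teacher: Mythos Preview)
Your argument is correct: showing that a single-valued, everywhere-defined, Lipschitz (hence hemicontinuous) monotone operator is maximal monotone via the Minty--Browder criterion, and then invoking Rockafellar's sum theorem with the trivially satisfied constraint qualification $\mathrm{int}(D(A))=E\supset D(B)$, is exactly the standard route and is what one finds in \cite{Barbu}.

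Note, however, that the paper does not supply its own proof of this lemma at all; it simply quotes the result from \cite{Barbu} as a known fact. So there is no ``paper's own proof'' to compare against---your proposal is a faithful reconstruction of the classical argument that the cited reference contains.
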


\noindent The following result gives an equivalence of fixed point problem and problem \eqref{bami2}.

\begin{lem}\label{babu}
Let $B:E \to 2^{E^*} $ be a maximal monotone mapping and
$ A: E \to E^* $ be a mapping. Define a mapping
$$
T_\lambda x:= J_{\lambda}^BoJ^{-1}(J - \lambda A)(x),~~x \in E, \lambda>0.
$$
\noindent Then $F(T_\lambda)=(A+B)^{-1}(0),$ where $F(T_\lambda)$ denotes the set of all fixed points of $T_\lambda$.
\end{lem}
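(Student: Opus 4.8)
The plan is to establish the set equality $F(T_\lambda)=(A+B)^{-1}(0)$ by a direct chain of equivalences obtained from unwinding the definitions of the resolvent $J^B_\lambda$ and of the normalized duality mapping. Throughout we rely on the standing assumptions on $E$ (reflexive, strictly convex, smooth, with strictly convex dual), which guarantee that $J:E\to E^*$ is single-valued and bijective with $J^{-1}=J_*$, and that for each $r>0$ the resolvent $J^B_r=(J+rB)^{-1}J$ is a single-valued map on $E$. In particular $T_\lambda$ is well-defined and the statement ``$x=T_\lambda x$'' is meaningful.

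First I would fix $x\in E$ and $\lambda>0$ and set $w:=(J-\lambda A)x=Jx-\lambda Ax\in E^*$. By definition of $T_\lambda$ and of the resolvent, the point $y:=T_\lambda x=J^B_\lambda\bigl(J^{-1}w\bigr)$ is the unique element of $E$ satisfying $J\bigl(J^{-1}w\bigr)\in Jy+\lambda By$; since $J\bigl(J^{-1}w\bigr)=w$, this reads $Jx-\lambda Ax\in Jy+\lambda By$. Consequently $x$ is a fixed point of $T_\lambda$ if and only if $x$ itself satisfies this membership in place of $y$, i.e. if and only if
$$Jx-\lambda Ax\in Jx+\lambda Bx.$$

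Next I would cancel the common term $Jx$ on both sides: the displayed membership is equivalent to the existence of some $b\in Bx$ with $Jx-\lambda Ax=Jx+\lambda b$, that is $-\lambda Ax=\lambda b$, hence (dividing by $\lambda>0$) $-Ax=b\in Bx$, which is precisely $0\in Ax+Bx=(A+B)x$. Since every implication in this chain is reversible, we obtain $x\in F(T_\lambda)\iff x\in(A+B)^{-1}(0)$, and taking the union over all such $x$ yields the claim.

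There is no genuinely hard step here; the only points deserving care are (i) invoking the hypotheses on $E$ so that $J$ is a bijection and $J^B_\lambda$ single-valued, making ``fixed point'' well-posed, and (ii) using $J\circ J^{-1}=\mathrm{id}_{E^*}$ to collapse the composition $(J+\lambda B)^{-1}J\circ J^{-1}(J-\lambda A)$ to $(J+\lambda B)^{-1}(J-\lambda A)$ before cancelling $Jx$.
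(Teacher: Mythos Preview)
Your proof is correct and is essentially identical to the paper's own argument: both proceed by the same chain of equivalences, using $J^B_\lambda=(J+\lambda B)^{-1}J$ and $J\circ J^{-1}=\mathrm{id}_{E^*}$ to reduce $x=T_\lambda x$ to $Jx-\lambda Ax\in Jx+\lambda Bx$, then cancelling $Jx$ and dividing by $\lambda$. The only difference is cosmetic---you spell out the role of the standing hypotheses on $E$ and introduce the auxiliary names $w$ and $y$, whereas the paper writes the equivalences more tersely.
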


\begin{proof}
Let $x \in   F(T_\lambda)$. Then

\begin{eqnarray*}
 x \in   F(T_\lambda) &\Leftrightarrow& x=T_\lambda x=J_{\lambda}^BoJ^{-1}(J - \lambda A)(x) \\
 &\Leftrightarrow& x = (J+\lambda B)^{-1} JoJ^{-1}(Jx - \lambda Ax) \\
 &\Leftrightarrow& Jx - \lambda Ax \in Jx+\lambda Bx \\
  &\Leftrightarrow& 0 \in \lambda (Ax+Bx)\\
  &\Leftrightarrow& 0 \in Ax+Bx \\
  &\Leftrightarrow& x \in (A+B)^{-1}(0).
\end{eqnarray*}

\end{proof}

\noindent We shall adopt the following notation in this paper:\\
. $x_n\rightarrow x$ means that $x_n\rightarrow x$ strongly.\\
. $x_n\rightharpoonup x$ means that $x_n\rightarrow x$ weakly.\\

\section{Approximation Method}\label{Sec:Method}
\noindent
In this section, we propose our method and state certain conditions under which we obtain the desired convergence for our proposed methods. First, we give the conditions governing the cost function and the sequence of parameters below.

\begin{asm}\label{Ass:VI}
\begin{itemize}
   \item[{\rm  (a)}] Let $E$ be a real 2-uniformly convex Banach space which is also uniformly smooth.
   \item[{\rm (b)}] Let $B:E \to 2^{E^*} $ be a maximal monotone operator;
      $ A: E \to E^* $ a monotone and $L$-Lipschitz continuous.
   \item[{\rm (c)}] The solution set $ (A+B)^{-1}(0) $ of the inclusion problem \eqref{bami2} is nonempty.
\end{itemize}
\end{asm}

\noindent
Throughout this paper, we assume that the duality mapping $J$ and the resolvent $J_{\lambda_n}^B:=(J+\lambda_nB)^{-1} J$ of maximal monotone operator $B$ are easy to compute.

\begin{asm}\label{Ass:Parameters}
Suppose the sequence $ \{ \lambda_n \}_{n=1}^\infty $ of step-sizes satisfies the following condition:
$$0<a\leq\lambda_n\leq b<\displaystyle\frac{1}{\sqrt{2\mu}\kappa L}$$
\noindent
where
 \begin{itemize}
                \item[] $\mu$ is the 2-uniform convexity constant of $E$;
                \item[] $\kappa$ is the 2-uniform smoothness constant of $E^*$;
                \item[] $L$ is the Lipschitz constant of $A$.
     \end{itemize}
\end{asm}

\noindent Assumption \ref{Ass:Parameters} is satisfied, e.g., for $ \lambda_n = a+\frac{n}{n+1}\Big(\frac{1}{\sqrt{2\mu}\kappa L}-a\Big)$ for all
$ n \geq 1$.

\noindent We now give our proposed method below.

\begin{alg}\label{Alg:AlgL}$\left.  {}\right.$\newline
\textbf{Step 0}: Let Assumptions~\ref{Ass:VI} and \ref{Ass:Parameters} hold. Let $ x_1 \in E $ be a given starting point. Set $ n := 1 $.\newline

\noindent\textbf{Step 1}: Compute $y_n:=J_{\lambda_n}^BoJ^{-1}(Jx_n - \lambda_nAx_n)$. If $x_n-y_n=0$: STOP.\newline

\noindent\textbf{Step 2}: Compute
\begin{eqnarray}\label{e31}
 x_{n+1} = J^{-1}[Jy_n - \lambda_n(Ay_n-Ax_n)].
\end{eqnarray}

\noindent\textbf{Step 3}: Set $ n \leftarrow n+1 $, and go to \textbf{Step 1}.
\end{alg}

\noindent
We observe that in real Hilbert spaces, the duality mapping $J$ becomes the identity mapping and our Algorithm \ref{Alg:AlgL} reduces to the algorithm proposed by Tseng in \cite{Tseng}. \\

\noindent
Note that both sequences $\{y_n\}$ and $\{x_n\}$ are in $E$. Furthermore, by Lemma \ref{babu}, we have that if $x_n=y_n$, then $x_n$ is a solution of problem \eqref{bami2}.\\

\noindent
To the best of our knowledge, the proposed Algorithm \ref{Alg:AlgL} is the only known algorithm which can solve monotone inclusion problem
\eqref{bami2} without the inverse-strongly monotonicity of $A$. We consider some various cases of Algorithm \ref{Alg:AlgL}.

\begin{itemize}
  \item When $A=0$ in Algorithm \ref{Alg:AlgL}, then Algorithm \ref{Alg:AlgL} reduces to the methods proposed in \cite{Barbu,Guler,Kamimura, Kohsaka,Lionspl,LionsMercier,Martinet22,Passtygb,Reichbook,Rockafellar27,SolodovSvaiter}.
  In this case, the assumption that $E$ is 2-uniformly convex Banach space and uniformly smooth is not needed. In fact, the convergence can be obtained in reflexive Banach spaces in this case. However, we do not know if the convergence of Algorithm \ref{Alg:AlgL} can be obtained in a more general reflexive Banach space for problem \eqref{bami2}.

  \item When $B=N_C$, the normal cone for closed and convex subset $C$ of $E$ ($N_C(x):=\{x^* \in E^*:\langle y-x,x^*\rangle \leq 0, \forall y \in C \}$), then the inclusion problem \eqref{bami2} reduces to a variational inequality problem (i.e., find $x \in C: \langle Ax,y-x\rangle \geq 0,~\forall y \in C$). It is well known that $N_C=\partial \delta_C$, where $\delta_C$ is the indicator function of $C$ at $x$, defined by $\delta_C(x)=0$ if $x \in C$ and $\delta_C(x)=+\infty$ if $x \notin C$ and
      $\partial (.)$ is the subdifferential, defined by $\partial f(x):=\{x^* \in E^*: f(y)\geq f(x)+\langle x^*, y-x\rangle,~~\forall y \in E\}$ for a proper, lower semicontinuous convex functional $f$ on $E$. Using the theorem of  Rockafellar in \cite{RockafellarRT1,RockafellarRT2}, $N_C=\partial \delta_C$ is maximal monotone. Hence,
      $$
      Jz \in J (J_{\lambda_n}^B)+\lambda_n \partial \delta_C(J_{\lambda_n}^B),~~\forall z \in E.
      $$
  \noindent This implies that
  \begin{eqnarray*}
  0\in \partial \delta_C(J_{\lambda_n}^B)+\frac{1}{\lambda_n}J (J_{\lambda_n}^B)-\frac{1}{\lambda_n}Jz
 =\partial \Big(\delta_C+\frac{1}{2\lambda_n}\|.\|^2-\frac{1}{\lambda_n}Jz \Big)J_{\lambda_n}^B.
  \end{eqnarray*}
Therefore,
$$
J_{\lambda_n}^B(z)={\rm argmin}_{y \in E}\Big\{\delta_C(y)+\frac{1}{2\lambda_n}\|y\|^2-\frac{1}{\lambda_n}\langle y,Jz\rangle \Big\}
$$
\noindent
and $y_n$ in Algorithm \ref{Alg:AlgL} reduces to
$$
y_n={\rm argmin}_{y \in E}\Big\{\delta_C(y)+\frac{1}{2\lambda_n}\|y\|^2-\frac{1}{\lambda_n}\langle y,Jx_n - \lambda_nAx_n\rangle \Big\}.
$$
\end{itemize}

\noindent
However, in implementing our proposed Algorithm \ref{Alg:AlgL}, we assume that the resolvent $(J+\lambda_nB)^{-1} J$ is easy to compute and the duality mapping $J$ is easily computable as well. On the other hand, one has to obtain the Lipschitz constant, $L$, of the monotone mapping $A$ (or an estimate of it). In a case when the Lipschitz constant cannot be accurately estimated or overestimated, this might result in too small step-sizes $\lambda_n$. This is a drawback of our proposed Algorithm \ref{Alg:AlgL}. One way to overcome this obstacle is to introduce linesearch in our Algorithm \ref{Alg:AlgL}. This case will be considered in Algorithm \ref{Alg:AlgL1}.

\subsection{Convergence Analysis}\label{Sec:Convergence}
\noindent
In this Section, we give the convergence analysis of the proposed Algorithm \ref{Alg:AlgL}. First, we establish the boundedness of the sequence of iterates generated by Algorithm \ref{Alg:AlgL}.

\begin{lem}\label{t31}
Let Assumptions~\ref{Ass:VI} and \ref{Ass:Parameters} hold. Assume that $x^*\in (A+B)^{-1}(0)$ and
let the sequence $ \{x_n\}_{n=1}^\infty $ be generated by Algorithm~\ref{Alg:AlgL}.
Then $\{x_n\}$ is bounded.
\end{lem}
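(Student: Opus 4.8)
The plan is to derive a Fej\'er-type monotonicity inequality for $\phi$ relative to an arbitrary solution $x^*\in (A+B)^{-1}(0)$, namely
\begin{equation*}
\phi(x^*,x_{n+1})\le \phi(x^*,x_n)-\bigl(1-2\mu\kappa^2L^2\lambda_n^2\bigr)\,\phi(y_n,x_n),\qquad n\ge 1,
\end{equation*}
and then read off boundedness: since $0<a\le\lambda_n\le b<\frac{1}{\sqrt{2\mu}\kappa L}$, one has $1-2\mu\kappa^2L^2\lambda_n^2\ge 1-2\mu\kappa^2L^2b^2>0$, so $\{\phi(x^*,x_n)\}$ is nonincreasing, hence bounded; because $\phi(x^*,x_n)\ge(\|x_n\|-\|x^*\|)^2$ this forces $\{x_n\}$ (and, a fortiori, $\{y_n\}$) to be bounded.

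To obtain the displayed inequality I would split the analysis into the forward--correction step and the resolvent step. For the forward--correction step, note that \eqref{e31} gives $Jx_{n+1}=Jy_n-\lambda_n(Ay_n-Ax_n)\in E^*$. Since $E$ is $2$-uniformly convex, Lemma~\ref{lemAv} (applied with $E$ replaced by $E^*$, using reflexivity) shows $E^*$ is $2$-uniformly smooth with constant $\kappa$; expanding $\|Jx_{n+1}\|^2$ via Lemma~\ref{lemXu} in $E^*$ (whose duality map is $J^{-1}$), using $J^{-1}Jy_n=y_n$, the identity $V(x^*,J\,\cdot\,)=\phi(x^*,\,\cdot\,)$, and the definition \eqref{new} of $\phi$, I would arrive at
\begin{equation*}
\phi(x^*,x_{n+1})\le \phi(x^*,y_n)+2\lambda_n\langle Ay_n-Ax_n,\,x^*-y_n\rangle+2\kappa^2\lambda_n^2\|Ay_n-Ax_n\|^2 .
\end{equation*}

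For the resolvent step, $y_n=J^B_{\lambda_n}J^{-1}(Jx_n-\lambda_nAx_n)$ means $\tfrac{1}{\lambda_n}\bigl(Jx_n-\lambda_nAx_n-Jy_n\bigr)\in By_n$, while $x^*\in(A+B)^{-1}(0)$ gives $-Ax^*\in Bx^*$; monotonicity of $B$ then yields $\langle Jx_n-Jy_n,\,y_n-x^*\rangle\ge\lambda_n\langle Ax_n-Ax^*,\,y_n-x^*\rangle$. Feeding this into the three-point identity Lemma~\ref{lm29}(i) with $x=x^*$, $z=y_n$, $y=x_n$ produces $\phi(x^*,y_n)\le\phi(x^*,x_n)-\phi(y_n,x_n)-2\lambda_n\langle Ax_n-Ax^*,\,y_n-x^*\rangle$. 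Adding the two estimates, the two $A$-cross-terms collapse by monotonicity of $A$:
\begin{equation*}
-2\lambda_n\langle Ax_n-Ax^*,\,y_n-x^*\rangle+2\lambda_n\langle Ay_n-Ax_n,\,x^*-y_n\rangle=-2\lambda_n\langle Ay_n-Ax^*,\,y_n-x^*\rangle\le 0,
\end{equation*}
so $\phi(x^*,x_{n+1})\le\phi(x^*,x_n)-\phi(y_n,x_n)+2\kappa^2\lambda_n^2\|Ay_n-Ax_n\|^2$; finally $L$-Lipschitz continuity of $A$ and Lemma~\ref{aoy} ($\|y_n-x_n\|^2\le\mu\,\phi(y_n,x_n)$) give $\|Ay_n-Ax_n\|^2\le\mu L^2\phi(y_n,x_n)$, which yields the claimed inequality.

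The routine parts are the algebraic expansions; the step that needs genuine care is the forward--correction estimate, because $Jx_{n+1}$ lives in the dual $E^*$, so the ``quadratic'' bound must be carried out in $E^*$ via its $2$-uniform smoothness (this is why Assumption~\ref{Ass:Parameters} uses the smoothness constant $\kappa$ of $E^*$ rather than of $E$), and one must keep the duality identities $J^{-1}J=\mathrm{id}$ and $V(\,\cdot\,,J\,\cdot\,)=\phi$ straight throughout. The other delicate point is the exact cancellation that turns the two $A$-terms into a single monotonicity inequality evaluated at $y_n$; together with the step-size bound this is precisely what makes the coefficient $1-2\mu\kappa^2L^2\lambda_n^2$ strictly positive and delivers boundedness.
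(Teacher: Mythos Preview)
Your proposal is correct and follows essentially the same strategy as the paper: expand $\phi(x^*,x_{n+1})$ via the $2$-uniform smoothness of $E^*$ (Lemmas~\ref{lemAv} and~\ref{lemXu}), apply the three-point identity for $\phi$, use monotonicity together with the resolvent inclusion to discard the cross term, and finish with Lipschitz continuity of $A$ and Lemma~\ref{aoy} to obtain the key inequality $\phi(x^*,x_{n+1})\le\phi(x^*,x_n)-(1-2\mu\kappa^2L^2\lambda_n^2)\phi(y_n,x_n)$.

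The only organizational difference is that the paper applies Lemma~\ref{lm29}(i) with $z=x_n$ (forcing an extra appeal to Lemma~\ref{lm29}(ii) to convert $\phi(x_n,y_n)$ into $-\phi(y_n,x_n)$) and then invokes monotonicity of $A+B$ in one stroke to obtain $\langle Jx_n-Jy_n-\lambda_n(Ax_n-Ay_n),\,y_n-x^*\rangle\ge 0$, whereas you choose $z=y_n$ in the three-point identity (avoiding Lemma~\ref{lm29}(ii)) and split the monotonicity argument into separate uses of $B$ and $A$; adding your two inequalities reproduces exactly the paper's single inequality, so the two routes are equivalent and yours is slightly more streamlined.
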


\begin{proof}
By the Lyaponuv functional $\phi$, we have
\begin{align}
\phi(x^*,x_{n+1})=&\phi(x^*,J^{-1}(Jy_n - \lambda_n(Ay_n-Ax_n))) \nonumber \\
=& \|x^*\|^2-2\langle x^*,JJ^{-1}(Jy_n - \lambda_n(Ay_n-Ax_n))\rangle\nonumber\\
&+\|J^{-1}(Jy_n - \lambda_n(Ay_n-Ax_n))\|^2\nonumber\\
=& \|x^*\|^2-2\langle x^*,Jy_n - \lambda_n(Ay_n-Ax_n)\rangle+\|(Jy_n - \lambda_n(Ay_n-Ax_n))\|^2\nonumber\\
=& \|x^*\|^2-2\langle x^*,Jy_n\rangle+2\lambda_n\langle x^*, Ay_n-Ax_n\rangle
\nonumber\\
&+\|Jy_n - \lambda_n(Ay_n-Ax_n)\|^2.\label{m2}
\end{align}
Using Lemma~\ref{lemAv}, we get that $E^*$ is 2-uniformly smooth  and so by Lemma~\ref{lemXu}, we get
\begin{align}
\|Jy_n - \lambda_n(Ay_n-Ax_n)\|^2\leq& \|Jy_n\|^2-2\lambda_n\langle Ay_n-Ax_n,y_n\rangle\nonumber\\
& +2\kappa^2\|\lambda_n(Ay_n-Ax_n)\|^2.\label{me3}
\end{align}
Substituting \eqref{me3} into \eqref{m2}, we get
\begin{align}
\phi(x^*,x_{n+1})\leq&\|Jy_n\|^2 -2 \lambda_n\langle Ay_n-Ax_n,y_n\rangle +2\kappa^2\|\lambda_n(Ay_n-Ax_n)\|^2\nonumber\\
& +\|x^*\|^2-2\langle x^*,Jy_n\rangle +  2 \lambda_n\langle x^*, Ay_n-Ax_n\rangle\nonumber\\
=& \|x^*\|^2-2\langle x^*, Jy_n\rangle + \|y_n\|^2-2 \lambda_n\langle Ay_n-Ax_n,y_n-x^*\rangle\nonumber\\
&+ 2\kappa^2\|\lambda_n(Ay_n-Ax_n)\|^2  \nonumber\\
=& \phi(x^*,y_n) -2 \lambda_n\langle Ay_n-Ax_n,y_n-x^*\rangle + 2\kappa^2\|\lambda_n(Ay_n-Ax_n)\|^2.           \label{me4}
\end{align}

\noindent Using Lemma \ref{lm29} (i), we get
\begin{align}
\phi(x^*,y_n)=& \phi(x^*,x_n) +\phi(x_n,y_n) + 2\langle x^*-x_n, Jx_n-Jy_n\rangle \nonumber\\
=& \phi(x^*,x_n) +\phi(x_n,y_n) + 2\langle x_n-x^*, Jy_n-Jx_n\rangle. \label{me5}
\end{align}

\noindent Putting \eqref{me5} into \eqref{me4}, we get
\begin{align}
\phi(x^*,x_{n+1})=& \phi(x^*,x_n) + \phi(x_n,y_n) + 2\langle x_n-x^*, Jy_n-Jx_n\rangle \nonumber\\
&- 2 \lambda_n\langle Ay_n-Ax_n,y_n-x^*\rangle + 2\kappa^2\|\lambda_n(Ay_n-Ax_n)\|^2 \nonumber \\
=& \phi(x^*,x_n) + \phi(x_n,y_n) -2\langle y_n-x_n, Jy_n-Jx_n\rangle + 2\langle y_n-x^*, Jy_n-Jx_n\rangle \nonumber\\
& - 2 \lambda_n\langle Ay_n-Ax_n,y_n-x^*\rangle + 2\kappa^2\|\lambda_n(Ay_n-Ax_n)\|^2.    \label{me6}
\end{align}
Using Lemma \ref{lm29} (ii), we get
\begin{equation}
- \phi(y_n,x_n) + 2\langle y_n-x_n, Jy_n-Jx_n\rangle = \phi(x_n,y_n).    \label{me7}
\end{equation}
Substituting \eqref{me7} into \eqref{me6}, we have
\begin{align}
\phi(x^*,x_{n+1})\leq& \phi(x^*,x_n) - \phi(y_n,x_n) + 2\langle y_n-x^*, Jy_n-Jx_n\rangle - 2 \lambda_n\langle Ay_n-Ax_n,y_n-x^*\rangle \nonumber\\
&+  2\kappa^2\|\lambda_n(Ay_n-Ax_n)\|^2\nonumber\\
=&  \phi(x^*,x_n) - \phi(y_n,x_n)+  2\kappa^2\|\lambda_n(Ay_n-Ax_n)\|^2\nonumber\\
&-2\langle Jx_n-Jy_n-\lambda_n(Ax_n-Ay_n),y_n-x^*\rangle.\label{meee8}
\end{align}
Since $y_n= (J+\lambda_nB)^{-1} JoJ^{-1}(Jx_n - \lambda_nAx_n)$, we have
$Jx_n - \lambda_nAx_n \in (J+\lambda_nB)y_n$. Using the fact that $B$ is maximal monotone, then there exists $v_n \in By_n$ such that
 $Jx_n - \lambda_nAx_n=Jy_n+\lambda_n v_n$. Therefore
 \begin{equation}\label{me9}
v_n=\frac{1}{\lambda_n}(Jx_n-Jy_n-\lambda_n Ax_n).
\end{equation}
On the other hand, we know that $0 \in (Ax^*+Bx^*)$ and $Ay_n +v_n \in (A+B)y_n$. Since $A+B$ is maximal monotone, we obtain
\begin{eqnarray}\label{me10}
\langle Ay_n +v_n,y_n-x^*\rangle \geq 0.
\end{eqnarray}
Putting \eqref{me9} into \eqref{me10}, we get
\begin{eqnarray}\label{me11}
\langle Jx_n-Jy_n-\lambda_n(Ax_n-Ay_n),y_n-x^*\rangle \geq 0.
\end{eqnarray}
Now, using \eqref{me11} in \eqref{meee8}, we get
\begin{align}
\phi(x^*,x_{n+1})\leq& \phi(x^*,x_n) - \phi(y_n,x_n)+ 2\kappa^2\|\lambda_n(Ay_n-Ax_n)\|^2\nonumber\\
\leq&  \phi(x^*,x_n) - \phi(y_n,x_n)+  2\kappa^2\lambda_n^2L^2\mu\phi(y_n,x_n)\nonumber\\
=&\phi(x^*,x_n) -(1-2\kappa^2\lambda_n^2L^2\mu)\phi(y_n,x_n).\label{me8}
\end{align}
Using Assumption~\ref{Ass:Parameters}, we get
\begin{eqnarray}\label{ert}
\phi(x^*,x_{n+1}) \leq \phi(x^*, x_n),
\end{eqnarray}
which shows that $\lim\phi(x^*, x_n)$ exists and hence, $\{\phi(x^*, x_n)\}$ is bounded.
Therefore $\{x_n\}$ is bounded.
\end{proof}

\begin{dfn}
The duality mapping $J$ is weakly sequentially continuous if, for any sequence $\{x_n\}\subset E$ such that $x_n\rightharpoonup x$ as $n\rightarrow \infty$, then $Jx_n\rightharpoonup^* Jx$ as $n\rightarrow \infty$. It is known that the normalized duality map on $\ell_p$ spaces, $1 < p <\infty$, is weakly sequentially continuous.
\end{dfn}

\noindent
We now obtain the weak convergence result of Algorithm \ref{Alg:AlgL} in the next theorem.

\begin{thm}\label{th1}
Let Assumptions~\ref{Ass:VI} and \ref{Ass:Parameters} hold. Assume that $J$ is weakly sequentially continuous on $E$ and let the sequence $ \{x_n\}_{n=1}^\infty $ be generated by Algorithm~\ref{Alg:AlgL}. Then $\{x_n\}$ converges weakly to $z\in (A+B)^{-1}(0)$. Moreover, $z:=\underset{n\rightarrow \infty}\lim \Pi_{(A+B)^{-1}(0)}(x_n)$.
\end{thm}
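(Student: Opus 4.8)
The plan is to exploit the key estimate \eqref{me8} from the proof of Lemma~\ref{t31}, namely $\phi(x^*,x_{n+1}) \le \phi(x^*,x_n) - (1-2\kappa^2\lambda_n^2L^2\mu)\phi(y_n,x_n)$ for $x^*\in(A+B)^{-1}(0)=:S$. By Assumption~\ref{Ass:Parameters} one has $1-2\kappa^2\lambda_n^2L^2\mu \ge 1-2\kappa^2b^2L^2\mu =: c_0 > 0$, so telescoping and using that $\{\phi(x^*,x_n)\}$ converges (hence is bounded) gives $\sum_n \phi(y_n,x_n) < \infty$, in particular $\phi(y_n,x_n)\to 0$. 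Lemma~\ref{aoy} then yields $\|x_n-y_n\|\to 0$; since $\{x_n\}$ is bounded so is $\{y_n\}$, whence the uniform norm-to-norm continuity of $J$ on bounded subsets of the uniformly smooth space $E$ gives $\|Jx_n-Jy_n\|\to 0$, and the $L$-Lipschitz continuity of $A$ gives $\|Ax_n-Ay_n\|\to 0$.

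Next I identify the weak cluster points of $\{x_n\}$ as solutions. Since $E$ is $2$-uniformly convex and uniformly smooth it is reflexive, so some subsequence satisfies $x_{n_k}\rightharpoonup z$, and then $y_{n_k}\rightharpoonup z$ because $\|x_n-y_n\|\to 0$. Recall from the proof of Lemma~\ref{t31} (see \eqref{me9}) that $v_n := \lambda_n^{-1}(Jx_n-Jy_n-\lambda_nAx_n)\in By_n$, so $w_n := Ay_n+v_n\in(A+B)y_n$ and $w_n = (Ay_n-Ax_n)+\lambda_n^{-1}(Jx_n-Jy_n)$; since $\lambda_n\ge a>0$, the previous paragraph gives $\|w_n\|\to 0$ in $E^*$. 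Now $A+B$ is maximal monotone by Lemma~\ref{babu2}; testing its monotonicity inequality for $(y_{n_k},w_{n_k})$ against an arbitrary $(u,u^*)\in G(A+B)$ and letting $k\to\infty$ (using that $y_{n_k}-u$ is bounded, $w_{n_k}\to 0$ strongly in $E^*$, and $y_{n_k}\rightharpoonup z$) yields $\langle z-u,\,-u^*\rangle \ge 0$ for all $(u,u^*)\in G(A+B)$; maximality forces $(z,0)\in G(A+B)$, i.e.\ $z\in S$.

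To pass from cluster points to convergence of the whole sequence I use an Opial-type argument, for which I first record that $J$ is strongly monotone with constant $1/\mu$: by Lemma~\ref{lm29}(ii) and Lemma~\ref{aoy}, $2\langle x-y,Jx-Jy\rangle = \phi(x,y)+\phi(y,x) \ge \tfrac{2}{\mu}\|x-y\|^2$. Suppose $x_{n_k}\rightharpoonup z_1$ and $x_{m_j}\rightharpoonup z_2$ with $z_1,z_2\in S$; by \eqref{ert} both $\lim_n\phi(z_1,x_n)$ and $\lim_n\phi(z_2,x_n)$ exist, so $\lim_n\langle z_1-z_2,Jx_n\rangle$ exists, and evaluating it along $\{n_k\}$ and along $\{m_j\}$ using the weak sequential continuity of $J$ gives $\langle z_1-z_2,Jz_1\rangle = \langle z_1-z_2,Jz_2\rangle$, hence $\langle z_1-z_2,Jz_1-Jz_2\rangle = 0$ and strong monotonicity forces $z_1=z_2$. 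Thus $\{x_n\}$ has a unique weak cluster point $z$, so $x_n\rightharpoonup z\in S$.

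For the final assertion, $S$ is nonempty (Assumption~\ref{Ass:VI}(c)), closed and convex (the zero set of the maximal monotone operator $A+B$), and \eqref{ert} gives $\phi(x^*,x_{n+1})\le\phi(x^*,x_n)$ for all $x^*\in S$, so Lemma~\ref{lm28} shows $\{\Pi_S(x_n)\}$ is Cauchy and $z':=\lim_n\Pi_S(x_n)$ exists in $S$. Writing $\hat x_n:=\Pi_S(x_n)$, the standard variational characterization of the generalized projection gives $\langle \hat x_n-z,\,Jx_n-J\hat x_n\rangle \ge 0$ (test point $z\in S$); letting $n\to\infty$ — with $\hat x_n\to z'$ strongly, hence $J\hat x_n\to Jz'$, and $Jx_n\rightharpoonup^* Jz$ — gives $\langle z'-z,\,Jz-Jz'\rangle \ge 0$, i.e.\ $\langle z'-z,\,Jz'-Jz\rangle\le 0$, and strong monotonicity of $J$ forces $z'=z$; therefore $z=\lim_n\Pi_S(x_n)$. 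The main obstacle is the second paragraph: verifying that $w_n\to 0$ \emph{strongly} in $E^*$ (where the lower bound $\lambda_n\ge a$ and the uniform continuity of $J$ are used) and correctly invoking the weak$\times$strong closedness of the graph of $A+B$; once the strong monotonicity of $J$ is in hand, the Opial and projection arguments are routine.
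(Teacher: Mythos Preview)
Your proof is correct and follows the same skeleton as the paper's own argument: use the descent estimate \eqref{me8} from Lemma~\ref{t31} to obtain $\phi(y_n,x_n)\to 0$ and hence $\|x_n-y_n\|\to 0$, show weak cluster points lie in $(A+B)^{-1}(0)$ via the maximality of $A+B$ (Lemma~\ref{babu2}), and then use Lemma~\ref{lm28} and the variational characterization of the generalized projection together with the weak sequential continuity of $J$ to identify the limit with $\lim_n\Pi_{(A+B)^{-1}(0)}(x_n)$.

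There are two genuine, if minor, differences worth noting. First, for the step ``weak cluster points are solutions'' the paper works directly with the monotonicity of $B$ (using $\frac{1}{\lambda_{n_i}}(Jx_{n_i}-Jy_{n_i}-\lambda_{n_i}Ax_{n_i})\in By_{n_i}$) and then separately exploits the monotonicity of $A$, whereas you package the same information as $w_n := Ay_n+v_n\in(A+B)y_n$ with $\|w_n\|\to 0$ in $E^*$ and invoke weak--strong closedness of the maximal monotone graph; your version is a bit cleaner and makes the role of the lower bound $\lambda_n\ge a$ explicit. Second, for the passage from ``every weak cluster point is a solution'' to ``the whole sequence converges weakly'', you insert an explicit Opial-type argument based on the strong monotonicity of $J$ (obtained from Lemma~\ref{lm29}(ii) and Lemma~\ref{aoy}); the paper instead shows that \emph{every} weak cluster point $z$ coincides with the \emph{fixed} point $w=\lim_n\Pi_{(A+B)^{-1}(0)}(x_n)$, which yields uniqueness of the cluster point as a byproduct and makes your Opial step unnecessary. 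Both routes are valid; your Opial argument is self-contained and reusable, while the paper's route is slightly shorter since the projection identification has to be proved anyway.
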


\begin{proof}
Let $x^*\in (A+B)^{-1}(0)$. From \eqref{me8}, we have
\begin{eqnarray}\label{me12}
0&<& [1-2\kappa^2b^2L^2\mu]\phi(y_n, x_n)\leq [1-2\kappa^2\lambda_n^2L^2\mu]\phi(y_n, x_n)\nonumber \\
&\leq& \phi(x^*, x_n)-\phi(x^*,x_{n+1}).
\end{eqnarray}
Since $\lim_{n\rightarrow \infty}\phi(x^*, x_n)$ exists, we obtain from \eqref{me12} that
$$
\underset{n\rightarrow \infty}\lim \phi(y_n, x_n)=0.
$$
\noindent Applying Lemma \ref{aoy}, we get
$$
\underset{n\rightarrow \infty}\lim \|x_n-y_n\|=0.
$$
\noindent Since $E$ is uniformly smooth, the duality mapping $J$ is uniformly norm-to-norm continuous on each bounded subset
of $E$. Hence, we have
$$
\underset{n\rightarrow \infty}\lim \|Jx_n-Jy_n\|=0.
$$
\noindent
Since $\{x_n\}$ is bounded by Lemma \ref{t31}, there exists a subsequence $\{x_{n_i}\}$ of $\{x_n\}$ and $z\in C$ such that $x_{n_i}\rightharpoonup z$. Since $\underset{n\rightarrow \infty}\lim \|x_n-y_n\|=0$, it follows that $x_{{n_i}+1}\rightharpoonup z$. We now show that $z\in (A+B)^{-1}(0)$.\\

\noindent Suppose $(v,u) \in \textrm{Graph}(A+B)$. This implies that $Ju-Av \in Bv$. Furthermore, we obtain from
$y_{n_i}= (J+\lambda_{n_i}B)^{-1} JoJ^{-1}(Jx_{n_i} - \lambda_{n_i}Ax_{n_i})$ that
$$
(J-\lambda_{n_i}A)x_{n_i} \in (J+\lambda_{n_i} B)y_{n_i},
$$
\noindent and thus
$$
\frac{1}{\lambda_{n_i}}(Jx_{n_i}-Jy_{n_i}-\lambda_{n_i} Ax_{n_i}) \in By_{n_i}.
$$
\noindent Using the fact that $B$ is maximal monotone, we obtain
$$
\langle v-y_{n_i},Ju-Av-\frac{1}{\lambda_{n_i}}(Jx_{n_i}-Jy_{n_i}-\lambda_{n_i} Ax_{n_i})\rangle \geq 0.
$$
\noindent Therefore,
\begin{eqnarray*}
\langle v-y_{n_i},Ju\rangle &\geq& \langle v-y_{n_i},Av+\frac{1}{\lambda_{n_i}}(Jx_{n_i}-Jy_{n_i}-\lambda_{n_i} Ax_{n_i})\rangle \nonumber \\
&=& \langle v-y_{n_i}, Av-Ax_{n_i}\rangle +\langle v-y_{n_i}, \frac{1}{\lambda_{n_i}}(Jx_{n_i}-Jy_{n_i})\rangle \nonumber \\
&=& \langle v-y_{n_i}, Av-Ay_{n_i}\rangle + \langle v-y_{n_i}, Ay_{n_i}-Ax_{n_i}\rangle \nonumber \\
&&+\langle v-y_{n_i}, \frac{1}{\lambda_{n_i}}(Jx_{n_i}-Jy_{n_i})\rangle \nonumber \\
&\geq & \langle v-y_{n_i}, Ay_{n_i}-Ax_{n_i}\rangle+\langle v-y_{n_i}, \frac{1}{\lambda_{n_i}}(Jx_{n_i}-Jy_{n_i})\rangle.
\end{eqnarray*}
By the fact that $\underset{n\rightarrow \infty}\lim \|x_n-y_n\|=0$ and $A$ is Lipschitz continuous, we obtain
$\underset{n\rightarrow \infty}\lim \|Ax_n-Ay_n\|=0$. Consequently, we obtain that
$$
\langle v-z,Ju\rangle \geq 0.
$$
\noindent By the maximal monotonicity of $A+B$, we have $0\in (A+B)z$. Hence, $z \in (A+B)^{-1}(0)$.\\

\noindent Let $u_n:=\Pi_{(A+B)^{-1}(0)} (x_n)$. By \eqref{ert} and Lemma \ref{lm28}, we have that $\{u_n\}$ is a Cauchy sequence. Since $(A+B)^{-1}(0)$ is closed, we have that $\{u_n\}$ converges strongly to $ w \in (A+B)^{-1}(0)$. By the uniform smoothness of $E$, we also have
$\underset{n\rightarrow \infty}\lim \|Ju_n-Jw\|=0$. We then show that $z=w$. Using Lemma \ref{new2} (i), $u_n=\Pi_{(A+B)^{-1}(0)} (x_n)$ and $ z \in (A+B)^{-1}(0)$, we have
$$
\langle z-u_n,Ju_n-Jx_n\rangle \geq 0,~~\forall n \geq 1.
$$
\noindent Therefore,
\begin{eqnarray*}
\langle z-w,Jx_n-Ju_n\rangle&=& \langle z-u_n,Jx_n-Ju_n\rangle+\langle u_n-w,Jx_n-Ju_n\rangle\\
&\leq&\|u_n-w\|\|Jx_n-Ju_n\|\leq M\|u_n-w\|,~~\forall n \geq 1,
\end{eqnarray*}
where $M:=\underset{n \geq 1}\sup\|Jx_n-Ju_n\|$. Using $n=n_{i}$ in $\underset{n\rightarrow \infty}\lim\|u_n-w\|=0,
\underset{n\rightarrow \infty}\lim\|Ju_n-Jw\|=0$ and the weakly sequential continuity of $J$, we obtain
$$
\langle z-w,Jz-Jw\rangle \leq 0
$$
\noindent as $i\rightarrow \infty$. Therefore, $ \langle z-w,Jz-Jw\rangle = 0$. Since $E$ is strictly convex, we have $z = w$. Therefore, the sequence $\{x_n\}$ converges weakly to $z=\underset{n\rightarrow \infty}\lim \Pi_(A+B)^{-1}(0) (x_n)$. This completes the proof.
\end{proof}

\noindent It is easy to see from Algorithm \ref{Alg:AlgL} above and Lemma \ref{babu} that $x_n=y_n$ if and only if  $x_n \in (A+B)^{-1}(0)$. Also, we have already established that $\|x_n-y_n\|\rightarrow 0$ holds when $(A+B)^{-1}(0)\neq \emptyset$. Therefore, using the
$\|x_n-y_n\|$ as a measure of convergence rate, we obtain the following non asymptotic rate of convergence of our proposed Algorithm \ref{Alg:AlgL}.

\begin{thm}
Let Assumptions~\ref{Ass:VI} and \ref{Ass:Parameters} hold. Let the sequence $ \{x_n\}_{n=1}^\infty $ be generated by Algorithm~\ref{Alg:AlgL}. Then $\min_{1\leq k\leq n}\|x_k-y_k\|=O(1/\sqrt{n})$.
\end{thm}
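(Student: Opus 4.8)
The plan is to exploit the one-step descent inequality \eqref{me8} established in the proof of Lemma~\ref{t31}, namely
\begin{equation*}
\phi(x^*,x_{n+1}) \leq \phi(x^*,x_n) - \bigl(1-2\kappa^2\lambda_n^2 L^2\mu\bigr)\phi(y_n,x_n),
\end{equation*}
valid for any fixed $x^*\in(A+B)^{-1}(0)$. First I would set $\tau := 1-2\kappa^2 b^2 L^2\mu$ and observe that Assumption~\ref{Ass:Parameters} (which gives $\lambda_n\leq b<\tfrac{1}{\sqrt{2\mu}\,\kappa L}$) forces $\tau>0$ and $1-2\kappa^2\lambda_n^2 L^2\mu\geq\tau>0$ for every $n$. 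Since $\phi(y_n,x_n)\geq 0$ always, this yields $\tau\,\phi(y_n,x_n)\leq \phi(x^*,x_n)-\phi(x^*,x_{n+1})$.

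Next I would telescope: summing the last inequality over $k=1,\dots,n$ gives
\begin{equation*}
\tau\sum_{k=1}^n \phi(y_k,x_k) \leq \phi(x^*,x_1)-\phi(x^*,x_{n+1}) \leq \phi(x^*,x_1),
\end{equation*}
because $\phi(x^*,x_{n+1})\geq 0$. Hence $\sum_{k=1}^n\phi(y_k,x_k)\leq \phi(x^*,x_1)/\tau$, a bound independent of $n$. Since the minimum of a finite set of nonnegative numbers does not exceed their average,
\begin{equation*}
\min_{1\leq k\leq n}\phi(y_k,x_k) \leq \frac{1}{n}\sum_{k=1}^n\phi(y_k,x_k) \leq \frac{\phi(x^*,x_1)}{\tau\, n}.
\end{equation*}

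Finally, I would convert the bound on $\phi$ into a bound on the norm via Lemma~\ref{aoy}, which gives $\tfrac{1}{\mu}\|x_k-y_k\|^2 = \tfrac{1}{\mu}\|y_k-x_k\|^2 \leq \phi(y_k,x_k)$ for each $k$; taking minima over $1\leq k\leq n$ and combining with the previous display yields
\begin{equation*}
\min_{1\leq k\leq n}\|x_k-y_k\|^2 \leq \mu\min_{1\leq k\leq n}\phi(y_k,x_k) \leq \frac{\mu\,\phi(x^*,x_1)}{\tau\, n},
\end{equation*}
so that $\min_{1\leq k\leq n}\|x_k-y_k\| \leq \sqrt{\mu\,\phi(x^*,x_1)/\tau}\cdot n^{-1/2} = O(1/\sqrt{n})$. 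There is essentially no serious obstacle here: everything is a consequence of the descent estimate \eqref{me8}, a telescoping argument, and Lemma~\ref{aoy}. The only point that deserves care is making sure the coefficient $1-2\kappa^2\lambda_n^2 L^2\mu$ is bounded below by a positive constant uniformly in $n$ (which is exactly why the strict inequality $b<\tfrac{1}{\sqrt{2\mu}\,\kappa L}$ is imposed in Assumption~\ref{Ass:Parameters}), and noting that $\phi(x^*,x_1)$ is a finite, fixed quantity so the resulting constant does not depend on $n$.
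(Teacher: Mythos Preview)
Your proposal is correct and follows essentially the same route as the paper: start from the descent inequality \eqref{me8}, telescope, invoke Lemma~\ref{aoy} to pass from $\phi(y_k,x_k)$ to $\|x_k-y_k\|^2$, and use $\min\leq$ average. The only cosmetic difference is that the paper applies Lemma~\ref{aoy} before summing whereas you sum first; your introduction of the uniform constant $\tau=1-2\kappa^2 b^2 L^2\mu$ is in fact slightly cleaner than the paper's write-up, which leaves the factor $1-2\kappa^2\lambda_n^2 L^2\mu$ with an $n$-dependent index in the final bound.
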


\begin{proof}
We obtain from \eqref{me8} that
\begin{eqnarray*}
\phi(x^*,x_{n+1})\leq \phi(x^*,x_n)-(1-2\kappa^2\lambda_n^2L^2\mu)\phi(y_n,x_n).
\end{eqnarray*}
Hence, we have from Lemma \ref{aoy} that
\begin{eqnarray*}
\frac{1}{\mu}(1-2\kappa^2\lambda_n^2L^2\mu)\|x_n-y_n\|^2&\leq& (1-2\kappa^2\lambda_n^2L^2\mu)\phi(y_n,x_n)\\
&\leq& \phi(x^*,x_n)-\phi(x^*,x_{n+1}).
\end{eqnarray*}
By Assumption~\ref{Ass:Parameters}, we get
\begin{eqnarray*}
\sum_{k=1}^n\|x_k-y_k\|^2\leq \frac{\mu}{(1-2\kappa^2\lambda_n^2L^2\mu)}\phi(x^*,x_1).
\end{eqnarray*}
Therefore,
$$\min_{1\leq k\leq n}\|x_k-y_k\|^2\leq \frac{\mu}{n(1-2\kappa^2\lambda_n^2L^2\mu)}\phi(x^*,x_1).$$
\noindent
This implies that
$$\min_{1\leq k\leq n}\|x_k-y_k\|=O(1/\sqrt{n}).$$

\end{proof}

\noindent Next, we propose another iterative method such that the sequence of step-sizes does not depend on the Lipschitz constant of monotone operator $A$ in problem \eqref{bami2}.

\begin{alg}\label{Alg:AlgL1}$\left.  {}\right.$\newline
\textbf{Step 0}: Let Assumption~\ref{Ass:VI} hold. Given $\gamma >0, l \in (0,1)$ and $\theta \in (0,\frac{1}{\sqrt{2\mu}\kappa})$. Let $ x_1 \in E $ be a given starting point. Set $ n := 1 $.\newline

\noindent\textbf{Step 1}: Compute $y_n:= J_{\lambda_n}^BJ^{-1}(Jx_n - \lambda_nAx_n)$, where
$\lambda_n$ is chosen to be the largest
$$
\lambda \in \{\gamma,\gamma l,\gamma l^2,\ldots\}
$$
\noindent satisfying
\begin{eqnarray}\label{seti}
\lambda\|Ax_n-Ay_n\| \leq \theta\|x_n-y_n\|.
\end{eqnarray}
If $x_n-y_n=0$: STOP.\newline

\noindent\textbf{Step 2}: Compute
\begin{eqnarray}\label{e41}
 x_{n+1} = J^{-1}[Jy_n - \lambda_n(Ay_n-Ax_n)].
\end{eqnarray}

\noindent\textbf{Step 3}: Set $ n \leftarrow n+1 $, and go to \textbf{Step 1}.
\end{alg}

\noindent Before we establish the weak convergence analysis of Algorithm \ref{Alg:AlgL1}, we first show that the line search rule given in \eqref{seti} is well-defined in this lemma.

\begin{lem}
The line search rule \eqref{seti} in Algorithm \ref{Alg:AlgL1} is well-defined and
$$
\min\Big\{\gamma,\frac{\theta l}{L} \Big\}\leq \lambda_n \leq \gamma.
$$

\end{lem}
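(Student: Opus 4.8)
The plan is to combine the $L$-Lipschitz continuity of $A$ with the geometric nature of the trial step-sizes. First I would prove that the search terminates. For a trial value $\lambda = \gamma l^k$, write $y^{(k)} := J_\lambda^B J^{-1}(Jx_n - \lambda Ax_n)$ for the corresponding auxiliary point, so that the test \eqref{seti} reads $\lambda\|Ax_n - Ay^{(k)}\| \le \theta\|x_n - y^{(k)}\|$. By $L$-Lipschitz continuity, $\|Ax_n - Ay^{(k)}\| \le L\|x_n - y^{(k)}\|$, hence $\lambda\|Ax_n - Ay^{(k)}\| \le \lambda L\|x_n - y^{(k)}\|$. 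Since $l \in (0,1)$, there is a smallest index $k_0$ with $\gamma l^{k_0} \le \theta/L$; for that index $\lambda L \le \theta$, so the test is satisfied (trivially so if $x_n = y^{(k_0)}$). Hence the set of $\lambda \in \{\gamma,\gamma l,\gamma l^2,\dots\}$ passing \eqref{seti} is nonempty, and as a subset of a decreasing geometric sequence it has a largest element; thus $\lambda_n$ is well-defined.

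The upper bound $\lambda_n \le \gamma$ is immediate because $\gamma = \max\{\gamma,\gamma l,\gamma l^2,\dots\}$. For the lower bound I distinguish two cases. If $\lambda_n = \gamma$, then $\lambda_n \ge \min\{\gamma,\theta l/L\}$ trivially. Otherwise $\lambda_n = \gamma l^k$ for some $k \ge 1$, and by maximality the larger value $\lambda_n/l = \gamma l^{k-1}$ fails \eqref{seti}: with $\tilde y := J_{\lambda_n/l}^B J^{-1}(Jx_n - (\lambda_n/l)Ax_n)$ we have $(\lambda_n/l)\|Ax_n - A\tilde y\| > \theta\|x_n - \tilde y\|$. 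This failure forces $x_n \ne \tilde y$, for otherwise both sides vanish and the test would pass. Hence $\|x_n - \tilde y\| > 0$, and using $\|Ax_n - A\tilde y\| \le L\|x_n - \tilde y\|$ I may divide to get $\theta < (\lambda_n/l)L$, i.e. $\lambda_n > \theta l/L \ge \min\{\gamma,\theta l/L\}$. Combining the two cases yields $\min\{\gamma,\theta l/L\}\le \lambda_n \le \gamma$.

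There is no genuinely hard step here; the only point requiring care is that the auxiliary point appearing in \eqref{seti} depends on $\lambda$, so the Lipschitz estimate must be applied separately for each trial step-size — in particular for the failed value $\lambda_n/l$ — and one must exclude the degenerate case in which $x_n$ coincides with that failed value's auxiliary point. The latter is automatic: such a coincidence would make \eqref{seti} hold, contradicting the maximality of $\lambda_n$.
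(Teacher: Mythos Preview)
Your proposal is correct and follows essentially the same route as the paper: use $L$-Lipschitz continuity of $A$ to show the test \eqref{seti} passes once $\lambda\le\theta/L$, so the search terminates; then, when $\lambda_n<\gamma$, use that $\lambda_n/l$ fails the test together with Lipschitzness to extract $\lambda_n>\theta l/L$. Your version is in fact slightly more careful than the paper's in two respects: you make explicit that the auxiliary point $y$ depends on the trial step-size (the paper writes $J_{\lambda_n}^B J^{-1}(Jx_n-\lambda_n Ax_n)$ even when discussing the failed value $\lambda_n/l$, which is a notational slip), and you explicitly rule out the degenerate case $x_n=\tilde y$ before dividing.
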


\begin{proof}
Using the Lipschitz continuity of $A$ on $E$, we obtain
$$
\|Ax_n-A(J_{\lambda_n}^BJ^{-1}(Jx_n - \lambda_nAx_n))\| \leq L\|x_n-J_{\lambda_n}^BJ^{-1}(Jx_n - \lambda_nAx_n)\|.
$$
\noindent This implies that
$$
\frac{\theta}{L}\|Ax_n-A(J_{\lambda_n}^BJ^{-1}(Jx_n - \lambda_nAx_n))\| \leq \theta\|x_n-J_{\lambda_n}^BJ^{-1}(Jx_n - \lambda_nAx_n)\|.
$$
\noindent Therefore, \eqref{seti} holds whenever $\lambda_n \leq \frac{\theta}{L}$. Hence, $ \lambda_n$ is well-defined.\\

\noindent From the way $ \lambda_n$ is chosen, we can clearly see that $ \lambda_n \leq \gamma$. Now, suppose $ \lambda_n=\gamma$, then \eqref{seti} is satisfied and the lemma is proved. Suppose $ \lambda_n<\gamma$. Then $\frac{ \lambda_n}{l}$ violates \eqref{seti} and we get
\begin{eqnarray*}
L\|x_n-J_{\lambda_n}^BJ^{-1}(Jx_n - \lambda_nAx_n)\| &\geq & \|Ax_n-A(J_{\lambda_n}^BJ^{-1}(Jx_n - \lambda_nAx_n))\|\\
&>& \frac{\theta}{\frac{\lambda_n}{l}} \|x_n-J_{\lambda_n}^BJ^{-1}(Jx_n - \lambda_nAx_n)\|.
\end{eqnarray*}
This implies that $\lambda_n > \frac{\theta l}{L}$.  This completes the proof.
\end{proof}

\noindent We now give a weak convergence result using Algorithm \ref{Alg:AlgL1} in the next theorem.

\begin{thm}\label{th2}
Let Assumptions~\ref{Ass:VI}. Assume that $J$ is weakly sequentially continuous on $E$ and let the sequence $ \{x_n\}_{n=1}^\infty $ be generated by Algorithm~\ref{Alg:AlgL1}. Then $\{x_n\}$ converges weakly to $z\in (A+B)^{-1}(0)$. Moreover, $z:=\underset{n\rightarrow \infty}\lim \Pi_{(A+B)^{-1}(0)}(x_n)$.
\end{thm}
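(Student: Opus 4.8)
The plan is to mirror closely the argument used for Theorem \ref{th1}, with the step-size bound from Assumption~\ref{Ass:Parameters} replaced by the two-sided estimate $\min\{\gamma,\theta l/L\}\le\lambda_n\le\gamma$ established in the preceding lemma, and the Lipschitz inequality $\|Ax_n-Ay_n\|\le L\|x_n-y_n\|$ replaced throughout by the line-search inequality \eqref{seti}, namely $\lambda_n\|Ax_n-Ay_n\|\le\theta\|x_n-y_n\|$. First I would repeat the computation leading to \eqref{meee8} verbatim: this only uses Lemmas~\ref{lemAv}, \ref{lemXu}, \ref{lm29}, and is independent of how $\lambda_n$ is chosen. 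Then, exactly as in \eqref{me9}--\eqref{me11}, using that $y_n=(J+\lambda_nB)^{-1}JJ^{-1}(Jx_n-\lambda_nAx_n)$, that $B$ is maximal monotone, that $A+B$ is maximal monotone (Lemma~\ref{babu2}), and that $0\in(A+B)x^*$, I obtain $\langle Jx_n-Jy_n-\lambda_n(Ax_n-Ay_n),y_n-x^*\rangle\ge0$, which plugged into \eqref{meee8} yields
\begin{equation*}
\phi(x^*,x_{n+1})\le\phi(x^*,x_n)-\phi(y_n,x_n)+2\kappa^2\lambda_n^2\|Ay_n-Ax_n\|^2.
\end{equation*}
Now, instead of bounding $\lambda_n^2\|Ay_n-Ax_n\|^2$ by $\lambda_n^2L^2\mu\,\phi(y_n,x_n)$, I use \eqref{seti} to get $\lambda_n^2\|Ay_n-Ax_n\|^2\le\theta^2\|x_n-y_n\|^2\le\theta^2\mu\,\phi(y_n,x_n)$ (the last step via Lemma~\ref{aoy}), so that
\begin{equation*}
\phi(x^*,x_{n+1})\le\phi(x^*,x_n)-(1-2\kappa^2\mu\theta^2)\phi(y_n,x_n),
\end{equation*}
and since $\theta<\tfrac1{\sqrt{2\mu}\kappa}$ the coefficient $1-2\kappa^2\mu\theta^2$ is strictly positive. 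This is the exact analogue of \eqref{me8} and \eqref{ert}, so $\{\phi(x^*,x_n)\}$ is nonincreasing, hence convergent, $\{x_n\}$ is bounded, and telescoping gives $\phi(y_n,x_n)\to0$, hence $\|x_n-y_n\|\to0$ by Lemma~\ref{aoy}, hence $\|Jx_n-Jy_n\|\to0$ by uniform norm-to-norm continuity of $J$ on bounded sets.

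For the identification of weak limit points, I would take a subsequence $x_{n_i}\rightharpoonup z$; then $y_{n_i}\rightharpoonup z$ as well. The key point where Algorithm~\ref{Alg:AlgL1} differs is that I still need $\|Ax_{n_i}-Ay_{n_i}\|\to0$: this follows from the genuine Lipschitz continuity of $A$ (Assumption~\ref{Ass:VI}(b)) together with $\|x_n-y_n\|\to0$, exactly as in the proof of Theorem~\ref{th1} — note the line search uses $\lambda_n$ only as a scaling, not as a substitute for Lipschitz continuity. With this in hand, the computation in Theorem~\ref{th1} showing $\langle v-z,Ju\rangle\ge0$ for every $(v,u)\in\mathrm{Graph}(A+B)$ goes through unchanged; here one should observe that $\lambda_{n_i}\ge\min\{\gamma,\theta l/L\}>0$ guarantees $\tfrac1{\lambda_{n_i}}\|Jx_{n_i}-Jy_{n_i}\|\to0$, so the term $\langle v-y_{n_i},\tfrac1{\lambda_{n_i}}(Jx_{n_i}-Jy_{n_i})\rangle\to0$. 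Maximal monotonicity of $A+B$ then forces $0\in(A+B)z$, i.e.\ $z\in(A+B)^{-1}(0)$.

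Finally, for the full weak convergence and the identification $z=\lim\Pi_{(A+B)^{-1}(0)}(x_n)$, I would argue verbatim as in the last part of the proof of Theorem~\ref{th1}: put $u_n:=\Pi_{(A+B)^{-1}(0)}(x_n)$; the inequality $\phi(x^*,x_{n+1})\le\phi(x^*,x_n)$ (valid for every $x^*\in(A+B)^{-1}(0)$) together with Lemma~\ref{lm28} shows $\{u_n\}$ is Cauchy, hence converges strongly to some $w\in(A+B)^{-1}(0)$, and $\|Ju_n-Jw\|\to0$. Using Lemma~\ref{new2} (the characterization of $\Pi$) one gets $\langle z-u_n,Ju_n-Jx_n\rangle\ge0$, and then expanding $\langle z-w,Jx_n-Ju_n\rangle$ and passing to the limit along $n_i$ using weak sequential continuity of $J$ gives $\langle z-w,Jz-Jw\rangle\le0$; since the reverse inequality holds by monotonicity of $J$, equality holds, and strict convexity of $E$ forces $z=w$. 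As the weak subsequential limit is unique, the whole sequence $\{x_n\}$ converges weakly to $z=\lim_{n\to\infty}\Pi_{(A+B)^{-1}(0)}(x_n)$. The only genuinely new bookkeeping compared to Theorem~\ref{th1} is replacing the constant bound on $\lambda_n$ by the line-search estimates at the two places flagged above; I expect no real obstacle, the main thing to be careful about being that \eqref{seti} already carries the factor $\lambda_n$, so one should not double count it when estimating $\lambda_n^2\|Ax_n-Ay_n\|^2$.
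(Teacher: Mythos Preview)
Your proposal is correct and follows essentially the same approach as the paper: the paper's proof simply reproduces the chain of inequalities from Lemma~\ref{t31} up to \eqref{meee8}, replaces the Lipschitz estimate by the line-search bound \eqref{seti} to obtain $\phi(x^*,x_{n+1})\le\phi(x^*,x_n)-(1-2\kappa^2\theta^2\mu)\phi(y_n,x_n)$, and then defers entirely to the proof of Theorem~\ref{th1} for the remainder. Your write-up is in fact more careful than the paper's, since you explicitly flag that the lower bound $\lambda_{n}\ge\min\{\gamma,\theta l/L\}$ is what controls the $\tfrac{1}{\lambda_{n_i}}(Jx_{n_i}-Jy_{n_i})$ term in the weak-limit identification step.
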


\begin{proof}
Using the same line of arguments as in the proof of Lemma \ref{t31}, we can obtain from \eqref{me8} that
\begin{eqnarray}\label{hm}
\phi(x^*,x_{n+1})&\leq& \phi(x^*,x_n) - \phi(y_n,x_n)+ 2\kappa^2\|\lambda_n(Ay_n-Ax_n)\|^2\nonumber\\
&\leq&  \phi(x^*, x_n) -\phi(y_n, x_n) +2\kappa^2\theta^2\|y_n-x_n\|^2 \nonumber\\
&\leq&  \phi(x^*,x_n) - \phi(y_n,x_n)+  2\kappa^2\theta^2\mu\phi(y_n,x_n)\nonumber\\
&=&\phi(x^*,x_n) -(1-2\kappa^2\theta^2\mu)\phi(y_n,x_n).
\end{eqnarray}
Since $\theta^2 <\frac{1}{2\kappa^2\mu}$, we get
\begin{eqnarray}\label{ert23}
\phi(x^*,x_{n+1}) \leq \phi(x^*, x_n),
\end{eqnarray}
which shows that $\lim\phi(x^*, x_n)$ exists and hence, $\{\phi(x^*, x_n)\}$ is bounded.
Therefore $\{x_n\}$ is bounded. The rest of the proof follows by using the same arguments as in the proof of Theorem \ref{th1}. The completes the proof.
\end{proof}

\noindent Finally, we give a modification of Algorithm \ref{Alg:AlgL} and consequently obtain the strong convergence analysis below.

\begin{alg}\label{Alg:AlgL7}$\left.  {}\right.$\newline
\textbf{Step 0}: Let Assumptions~\ref{Ass:VI} and \ref{Ass:Parameters} hold. Suppose that $\{\alpha_n\}$ is a real sequence in (0,1) and let $ x_1 \in E $ be a given starting point. Set $ n := 1 $.\newline

\noindent\textbf{Step 1}: Compute $y_n:= J_{\lambda_n}^BJ^{-1}(Jx_n - \lambda_nAx_n)$. If $x_n-y_n=0$: STOP.\newline

\noindent\textbf{Step 2}: Compute
\begin{eqnarray}\label{e44}
 w_n = J^{-1}[Jy_n - \lambda_n(Ay_n-Ax_n)]
\end{eqnarray}
and
\begin{eqnarray}\label{e45}
 x_{n+1} = J^{-1}[\alpha_nJx_1+(1-\alpha_n)Jw_n].
\end{eqnarray}

\noindent\textbf{Step 3}: Set $ n \leftarrow n+1 $, and go to \textbf{Step 1}.
\end{alg}

\begin{thm}
Let Assumptions~\ref{Ass:VI} and \ref{Ass:Parameters} hold. Suppose that $\lim_{n \to \infty} \alpha_n = 0 $ and $ \sum_{n = 1}^{\infty} \alpha_n = \infty $. Let the sequence $ \{x_n\}_{n=1}^\infty $ be generated by Algorithm~\ref{Alg:AlgL7}. Then $\{x_n\}$ converges strongly to $z=\Pi_{(A+B)^{-1}(0)}(x_1)$.
\end{thm}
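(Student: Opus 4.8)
The plan is to read Algorithm~\ref{Alg:AlgL7} as a Halpern-type anchoring (at $x_1$) of the Tseng step, and to prove directly that $\phi(z,x_n)\to 0$ for $z:=\Pi_{(A+B)^{-1}(0)}(x_1)$; the conclusion $x_n\to z$ then follows at once from Lemma~\ref{aoy}. The first ingredient is essentially free: since $w_n$ in \eqref{e44} is defined by exactly the same formula as $x_{n+1}$ in Algorithm~\ref{Alg:AlgL}, the computation leading to \eqref{me8} applies verbatim with $w_n$ in place of $x_{n+1}$, so for every $x^*\in(A+B)^{-1}(0)$,
\begin{equation*}
\phi(x^*,w_n)\le\phi(x^*,x_n)-(1-2\kappa^2\lambda_n^2L^2\mu)\,\phi(y_n,x_n)\le\phi(x^*,x_n).
\end{equation*}
Writing $\phi(x^*,x_{n+1})=V\bigl(x^*,\alpha_nJx_1+(1-\alpha_n)Jw_n\bigr)$ and using that $u\mapsto V(x^*,u)$ is convex on $E^*$ with $V(x^*,Jx^*)=0$, one gets $\phi(x^*,x_{n+1})\le\alpha_n\phi(x^*,x_1)+(1-\alpha_n)\phi(x^*,w_n)\le\alpha_n\phi(x^*,x_1)+(1-\alpha_n)\phi(x^*,x_n)$, hence $\phi(x^*,x_n)\le\phi(x^*,x_1)$ for all $n$ by induction; in particular $\{x_n\}$, $\{y_n\}$, $\{w_n\}$ are bounded.

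The second ingredient is a sharpened recursion around $z$. Apply Lemma~\ref{new2} with $x=z$, $x^*=\alpha_nJx_1+(1-\alpha_n)Jw_n$ and $y^*=\alpha_n(Jz-Jx_1)$; since $J^{-1}x^*=x_{n+1}$ and $x^*+y^*=(1-\alpha_n)Jw_n+\alpha_nJz$, this gives
\begin{equation*}
\phi(z,x_{n+1})+2\alpha_n\langle x_{n+1}-z,\,Jz-Jx_1\rangle\le V\bigl(z,(1-\alpha_n)Jw_n+\alpha_nJz\bigr)\le(1-\alpha_n)\phi(z,w_n),
\end{equation*}
the last inequality again by convexity of $V(z,\cdot)$ and $V(z,Jz)=0$. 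Combined with the first ingredient (for $x^*=z$) this yields the master inequality
\begin{equation*}
\phi(z,x_{n+1})\le(1-\alpha_n)\bigl[\phi(z,x_n)-(1-2\kappa^2\lambda_n^2L^2\mu)\phi(y_n,x_n)\bigr]+2\alpha_n\langle x_{n+1}-z,\,Jx_1-Jz\rangle.
\end{equation*}

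Finally I would run the Mainge argument (Lemma~\ref{mm}) on $a_n:=\phi(z,x_n)$, splitting into two cases. If $\{a_n\}$ is eventually nonincreasing it converges, so from the master inequality $(1-2\kappa^2\lambda_n^2L^2\mu)\phi(y_n,x_n)\le a_n-a_{n+1}+2\alpha_n\langle x_{n+1}-z,Jx_1-Jz\rangle\to0$; since $1-2\kappa^2\lambda_n^2L^2\mu\ge1-2\kappa^2b^2L^2\mu>0$ by Assumption~\ref{Ass:Parameters}, $\phi(y_n,x_n)\to0$, whence (Lemma~\ref{aoy}) $\|x_n-y_n\|\to0$, then $\|Jx_n-Jy_n\|\to0$ by uniform norm-to-norm continuity of $J$, then $\|Ax_n-Ay_n\|\to0$ by Lipschitz continuity, then $\|Jw_n-Jx_n\|\le\|Jy_n-Jx_n\|+b\|Ay_n-Ax_n\|\to0$ and $\|Jx_{n+1}-Jw_n\|=\alpha_n\|Jx_1-Jw_n\|\to0$, hence $\|x_{n+1}-x_n\|\to0$ (here $J^{-1}$ is uniformly norm-to-norm continuous on bounded sets because $E^*$ is $2$-uniformly smooth by Lemma~\ref{lemAv}). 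Exactly as in the proof of Theorem~\ref{th1}, every weak subsequential limit of $\{x_n\}$ lies in $(A+B)^{-1}(0)$; choosing a subsequence realizing $\limsup_n\langle x_{n+1}-z,Jx_1-Jz\rangle$ and passing to a weakly convergent sub-subsequence $x_{n_k}\rightharpoonup z'\in(A+B)^{-1}(0)$ (so $x_{n_k+1}\rightharpoonup z'$ too), the variational characterization of $z=\Pi_{(A+B)^{-1}(0)}(x_1)$, i.e.\ $\langle w-z,Jx_1-Jz\rangle\le0$ for all $w\in(A+B)^{-1}(0)$, gives $\limsup_n\langle x_{n+1}-z,Jx_1-Jz\rangle\le0$; now the master inequality has the form $a_{n+1}\le(1-\alpha_n)a_n+\alpha_n\sigma_n$ with $\sigma_n:=2\langle x_{n+1}-z,Jx_1-Jz\rangle$, $\limsup\sigma_n\le0$, $\sum\alpha_n=\infty$, so Lemma~\ref{lm23} gives $a_n\to0$. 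In the other case Lemma~\ref{mm} supplies $m_k\to\infty$ with $a_{m_k}\le a_{m_k+1}$ and $a_k\le a_{m_k+1}$; inserting $a_{m_k}\le a_{m_k+1}$ into the master inequality yields $(1-\alpha_{m_k})(1-2\kappa^2\lambda_{m_k}^2L^2\mu)\phi(y_{m_k},x_{m_k})\le2\alpha_{m_k}\langle x_{m_k+1}-z,Jx_1-Jz\rangle\to0$, so $\phi(y_{m_k},x_{m_k})\to0$ and the whole asymptotic-regularity chain holds along $\{m_k\}$; the solution-identification step then gives $\limsup_k\langle x_{m_k+1}-z,Jx_1-Jz\rangle\le0$, while from $a_{m_k}\le a_{m_k+1}$ one deduces $a_{m_k}\le2\langle x_{m_k+1}-z,Jx_1-Jz\rangle$ and then $a_{m_k+1}\le a_{m_k}+2\alpha_{m_k}\langle x_{m_k+1}-z,Jx_1-Jz\rangle$, so $\limsup_k a_{m_k+1}\le0$, i.e.\ $a_{m_k+1}\to0$, hence $a_k\le a_{m_k+1}\to0$. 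In either case $\phi(z,x_n)\to0$, and Lemma~\ref{aoy} gives $x_n\to z=\Pi_{(A+B)^{-1}(0)}(x_1)$. The main obstacle is precisely this last paragraph: one cannot extract $\phi(y_n,x_n)\to0$ (and hence the asymptotic regularity $\|x_{n+1}-x_n\|\to0$) directly because $\{\phi(z,x_n)\}$ need not be monotone — this is exactly what forces the Mainge split — and then the Theorem~\ref{th1} demiclosedness-type argument must be transported onto the correct subsequence to pin down $\limsup_n\langle x_{n+1}-z,Jx_1-Jz\rangle\le0$.
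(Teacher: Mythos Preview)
Your proposal is correct and follows essentially the same route as the paper: derive the key inequality $\phi(z,x_{n+1})\le(1-\alpha_n)\phi(z,x_n)+2\alpha_n\langle x_{n+1}-z,Jx_1-Jz\rangle$ via Lemma~\ref{new2} and convexity of $V(z,\cdot)$, then run the Maing\'e two-case split (Lemma~\ref{mm}) combined with Lemma~\ref{lm23}, using the Theorem~\ref{th1} demiclosedness argument to handle the $\limsup$. Your version is in fact slightly cleaner in two places: you give a direct boundedness argument (the paper's citation of Lemma~\ref{t31}, which concerns Algorithm~\ref{Alg:AlgL} rather than Algorithm~\ref{Alg:AlgL7}, is a bit loose), and you retain the $-(1-2\kappa^2\lambda_n^2L^2\mu)\phi(y_n,x_n)$ term inside the master inequality, which lets you extract $\phi(y_{m_k},x_{m_k})\to0$ in Case~2 without a separate auxiliary estimate.
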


\begin{proof}
By Lemma \ref{t31}, we have that $\{x_n\}$ is bounded. Furthermore, using Lemma \ref{new2} with \eqref{e44} and \eqref{e45}, we have
\begin{eqnarray}\label{seyi}
\phi(z,x_{n+1})&=&\phi(z,J^{-1}(\alpha_n Jx_1+(1-\alpha_n)Jw_n))\nonumber\\
&=&V(z,\alpha_n Jx_1+(1-\alpha_n)Jw_n))\nonumber\\
&\leq&V(z,\alpha_n Jx_1+(1-\alpha_n)Jw_n-\alpha_n(Jx_1-Jz))\nonumber\\
&&+2\alpha_n\langle Jx_1-Jz,x_{n+1}-z\rangle \nonumber\\
&=&V(z,\alpha_n Jz+(1-\alpha_n)Jw_n)+2\alpha_n\langle Jx_1-Jz,x_{n+1}-z\rangle\nonumber\\
&\leq& \alpha_n V(z,Jz)+(1-\alpha_n)V(z,Jw_n)+2\alpha_n\langle Jx_1-Jz,x_{n+1}-z\rangle\nonumber\\
&=&(1-\alpha_n)V(z,Jw_n)+2\alpha_n\langle Jx_1-Jz,x_{n+1}-z\rangle\nonumber\\
&\leq& (1-\alpha_n)V(z,Jx_n)+2\alpha_n\langle Jx_1-Jz,x_{n+1}-z\rangle\nonumber\\
&=&(1-\alpha_n)\phi(z,x_n)+2\alpha_n\langle Jx_1-Jz,x_{n+1}-z\rangle.
\end{eqnarray}
Set $a_n:=\phi(x_n,z)$ and divide the rest of the proof into two parts as follows.

{\em Case 1:} Suppose that there exists $n_0 \in \mathbb{N}$ such
that $\{\phi(z,x_n)\}_{n=n_0}^{\infty}$ is non-increasing. Then
$\{\phi(z,x_n)\}_{n=1}^{\infty}$  converges, and we therefore obtain
\begin{equation}\label{Eq:diffconv}
   a_n-a_{n+1}\rightarrow 0,~~n\rightarrow \infty.
\end{equation}
Using (\ref{ert}) in (\ref{e45}), we have
\begin{eqnarray}\label{fra}
V(z,Jx_{n+1}) &\leq& \alpha_n V(z,Jx_1)+(1-\alpha_n)V(z,Jw_n) \nonumber \\
&\leq& \alpha_n V(Jx_1,z)+(1-\alpha_n)V(Jx_n,z) \nonumber \\
&&-(1-\alpha_n)[1-2\kappa^2\theta^2\mu]V(y_n,Jx_n).
\end{eqnarray}
This implies from (\ref{fra}) that
$$(1-\alpha_n)[1-2\kappa^2\theta^2\mu]V(y_n,Jx_n)
\leq V(Jx_n,z)-V(Jx_{n+1},z)+ \alpha_nM_1,$$
 for some $M_1>0$. Thus,
$$(1-\alpha_n)[1-2\kappa^2\theta^2\mu]\phi(y_n,x_n)
\rightarrow 0,~~n\rightarrow \infty.$$
Hence,
$$
\phi(y_n,x_n)\rightarrow 0,~~n\rightarrow \infty.
$$
Consequently, $ \|x_n-y_n\|\rightarrow 0,~~n\rightarrow \infty.$
By \eqref{e44}, we get
\begin{eqnarray*}
\|Jw_n-Jy_n\|&=&\lambda_n\|Ay_n-Ax_n\|\\
&\leq& b\|Ay_n-Ax_n\|\rightarrow 0,~~n\rightarrow \infty.
\end{eqnarray*}
Therefore, $\|w_n-y_n\| \rightarrow 0,~~n\rightarrow \infty.$
Moreover, we obtain from (\ref{e45}) that
\begin{eqnarray}\label{fra2}
\|Jx_{n+1}-Jw_n\|&=&\alpha_n\|Jx_1-Jw_n\|  \leq \alpha_nM_2\rightarrow 0,~~n\rightarrow \infty,
\end{eqnarray}
for some $M_2>0$. Since $J^{-1}$ is norm-to-norm uniformly continuous on bounded subsets of $E^*$, we have that
$$\|x_{n+1}-w_n\|\rightarrow 0,~~n\rightarrow \infty.$$
 Now,
$$\|x_{n+1}-x_n\|\leq \|x_{n+1}-w_n\|+\|w_n-y_n\|+\|y_n-x_n\| \rightarrow 0,~~n\rightarrow \infty.$$
Since $\{x_n\}$ is a bounded sunset of $E$, we can choose a subsequence $\{x_{n_k}\}$ of $\{x_n\}$ such that
$x_{n_k}\rightharpoonup p \in E$ and
\begin{eqnarray*}
\limsup_{n\rightarrow \infty}  \langle Jx_1-Jz,x_n-z\rangle \leq 2 \lim_{k\rightarrow \infty} \langle Jx_1-Jz,x_{n_k}-z\rangle.
\end{eqnarray*}
Since $z=\Pi_C x_1$, we get
\begin{eqnarray}\label{j13}
\limsup_{n\rightarrow \infty}  \langle Jx_1-Jz,x_n-z\rangle &\leq& 2 \lim_{k\rightarrow \infty} \langle Jx_1-Jz,x_{n_k}-z\rangle \nonumber \\
&=& 2\langle Jx_1-Jz,p-z\rangle \leq 0.
\end{eqnarray}
This implies that
$$
\limsup_{n\rightarrow \infty} \langle Jx_1-Jz,x_n-z\rangle  \leq 0.
$$
Using Lemma \ref{lm23} and (\ref{j13}) in \eqref{seyi}, we obtain
$\lim_{n \rightarrow \infty}\phi(z,x_n)=0.$  Thus, $x_n\rightarrow z$, $n \rightarrow \infty$.

{\em Case 2:} Suppose that there exists a subsequence $\{x_{n_j}\}$ of $\{x_n\}$ such that
$$\phi(z,x_{m_j}) <\phi(z,x_{{m_j}+1}),~~\forall j\in \mathbb{N}.$$
From Lemma \ref{mm}, there exists a nondecreasing sequence $\{n_k\}$ of $\mathbb{N}$ such that
 $\lim_{k\rightarrow \infty}\lim n_k=\infty$ and the following inequalities hold for all $k \in \mathbb{N}$:
\begin{eqnarray}\label{opr}
\phi(z,x_{n_k}) \leq \phi(z,x_{{n_k}+1})~~{\rm and}~~ \phi(z,x_k) \leq \phi(z,x_{{n_k}+1}).
\end{eqnarray}
 Observe that
\begin{eqnarray*}
\phi(z,x_{n_k}) &\leq& \phi(z,x_{{n_k}+1}) \leq \alpha_{n_k}\phi(z,x_1)+(1-\alpha_{n_k})\phi(z,w_{n_k})\\
&\leq& \alpha_{n_k}\phi(z,x_1)+(1-\alpha_{n_k})\phi(z,x_{n_k}).
\end{eqnarray*}
Since $\lim_{n\rightarrow \infty}\alpha_n=0$, we get
$$\phi(z,x_{{n_k}+1})-\phi(z,x_{n_k})\rightarrow 0,~~k\rightarrow \infty.$$
Since $\{x_{n_k}\}$ is bounded, there exists a subsequence of $\{x_{n_k}\}$ still denoted by
$\{x_{n_k}\}$ which converges weakly to $ p\in E$. Repeating the same arguments as in \textit{Case 1} above, we can show that
$$\|x_{n_k}-y_{n_k}\|\rightarrow 0,~~k\rightarrow \infty,\|y_{n_k}-w_{n_k}\|\rightarrow 0,~~k\rightarrow \infty
 ~~{\rm and }~~\|x_{{n_k}+1}-x_{n_k}\|\rightarrow 0,~~k\rightarrow \infty.$$
Similarly, we can conclude that
\begin{eqnarray}\label{mm1}
\limsup_{k\rightarrow \infty} \langle x_{{n_k}+1}-z, Jx_1-Jz\rangle&=&
\limsup_{k\rightarrow \infty} \langle x_{n_k}-z, Jx_1-Jz\rangle \leq 0.
\end{eqnarray}
It then follows from \eqref{seyi} and (\ref{opr}) that
\begin{eqnarray*}
\phi(z,x_{{n_k}+1}) &\leq& (1-\alpha_{n_k})\phi(z,x_{n_k})+\alpha_{n_k}\langle x_{{n_k}+1}-z, Jx_1-Jz\rangle\\
&\leq& (1-\alpha_{n_k})\phi(z,x_{{n_k}+1})+\alpha_{n_k}\langle x_{{n_k}+1}-z, Jx_1-Jz\rangle.
\end{eqnarray*}
Since  $\alpha_{n_k}>0$, we get
$$ \phi(z,x_{n_k}) \leq \phi(z,x_{{n_k}+1}) \leq \langle x_{{n_k}+1}-z, Jx_1-Jz\rangle.$$
By (\ref{mm1}), we have that
$$\limsup_{k\rightarrow \infty} \phi(z,x_{n_k}) \leq \limsup_{k\rightarrow \infty} \langle x_{{n_k}+1}-z, Jx_1-Jz\rangle.$$
Therefore, $x_k\rightarrow z,~~k\rightarrow \infty.$
This concludes the proof.

\end{proof}

\begin{rem}
Our proposed Algorithms \ref{Alg:AlgL} and \ref{Alg:AlgL7} are more applicable than the proposed methods in \cite{Briceno,Cho,JiaoWang,Lin,LopezG,TakahashiW1,TakahashiW2,TakahashiW3,ShehuY,WangWang} even in Hilbert spaces. The methods proposed in \cite{Cho,JiaoWang,Lin,LopezG,TakahashiW1,TakahashiW2,TakahashiW3,ShehuY,WangWang} are only applicable for solving problem \eqref{bami2} in the case when $B$ is maximal monotone and $A$ is inverse-strongly monotone (co-coercive) operator in real Hilbert spaces. Our Algorithms \ref{Alg:AlgL} and \ref{Alg:AlgL7} are applicable for the case when $B$ is maximal monotone and $A$ is monotone operator even in 2-uniformy convex and uniformly smooth Banach spaces (e.g., $L_p, 1<p\leq 2$). Our results in this paper also complement the results of \cite{CombettesNguyen,IusemAlfredo}.

\end{rem}

\section{Application}\label{appl}
\noindent
In this section, we apply our results to the minimization of composite objective function of the type
\begin{eqnarray}\label{appl1}
\min_{x \in E} f(x)+g(x),
\end{eqnarray}
where $f:E\rightarrow \mathbb{R}\cup \{ +\infty\}$ is proper, convex and lower semi-continuous functional and
$g:E\rightarrow \mathbb{R}$ is convex functional.\\

\noindent
Many optimization problems from image processing \cite{Bredies}, statistical regression, machine learning (see, e.g., \cite{WangYamin} and the references contained therein), etc can be adapted into the form of \eqref{appl1}.
In this setting, we assume that $g$ represents the "smooth part" of the functional where $f$ is assumed to be non-smooth. Specifically, we assume that $g$ is G$\hat{a}$teaux-differentiable with derivative $\nabla g$ which is Lipschitz-continuous with constant $L$. Then by
\cite[thm.\ 3.13]{Peypouquetbook}, we have
$$
\langle \nabla g (x)-\nabla g(y), x-y\rangle \geq \frac{1}{L}\|\nabla g(x)-\nabla g(y)\|^2, ~~\forall x, y \in E.
$$
\noindent
Therefore, $\nabla g$ is monotone and Lipschitz continuous with Lipschitz constant $L$. Observe that problem \eqref{appl1} is equivalent to
find $ \in E$ such that
\begin{eqnarray}\label{appl2}
0\in \partial f(x)+ \nabla g(x).
\end{eqnarray}
Then problem \eqref{appl2} is a special case of inclusion problem \eqref{bami2} with $A:=\nabla g$ and $B:=\partial f$.

Next, we obtain the resolvent of  $\partial f$. Let us fix $r>0$ and $z \in E$. Suppose $J_r^{\partial f}$ is the resolvent of
$\partial f$. Then
$$
Jz \in J (J_r^{\partial f})+r \partial f(J_r^{\partial f}).
$$
\noindent Hence we obtain
  \begin{eqnarray*}
  0\in \partial f(J_r^{\partial f})+\frac{1}{r}J (J_r^{\partial f})-\frac{1}{r}Jz
 =\partial \Big(f+\frac{1}{2r}\|.\|^2-\frac{1}{r}Jz \Big)J_r^{\partial f}.
  \end{eqnarray*}
Therefore,
$$
J_r^{\partial f}(z)={\rm argmin}_{y \in E}\Big\{f(y)+\frac{1}{2r}\|y\|^2-\frac{1}{r}\langle y,Jz\rangle \Big\}.
$$
\noindent
We can then write $y_n$ in Algorithm \ref{Alg:AlgL} as
$$
y_n={\rm argmin}_{y \in E}\Big\{f(y)+\frac{1}{2\lambda_n}\|y\|^2-\frac{1}{\lambda_n}\langle y,Jx_n - \lambda_n \nabla g(x_n)\rangle \Big\}.
$$
\noindent We obtain the following weak and strong convergence results for problem \eqref{appl1}.

\begin{thm}\label{appl3}
Let $E$ be a real 2-uniformly convex Banach space which is also uniformly smooth and the solution set $S$ of  problem \eqref{appl1} be nonempty.
Suppose $ \{ \lambda_n \}_{n=1}^\infty $ satisfies the condition $0<a\leq\lambda_n\leq b<\displaystyle\frac{1}{\sqrt{2\mu}\kappa L}$.
Assume that $J$ is weakly sequentially continuous on $E$ and let the sequence $ \{x_n\}_{n=1}^\infty $ be generated by
\begin{eqnarray}\label{appl4}
\left\{  \begin{array}{llll}
   & x_1 \in E,\\
   & y_n={\rm argmin}_{y \in E}\Big\{f(y)+\frac{1}{2\lambda_n}\|y\|^2-\frac{1}{\lambda_n}\langle y,Jx_n - \lambda_n \nabla g(x_n)\rangle \Big\}\\
   & x_{n+1} = J^{-1}[Jy_n - \lambda_n(\nabla g(y_n)-\nabla g(x_n))],~~n \geq 1.
   \end{array}
   \right.
\end{eqnarray}
Then $\{x_n\}$ converges weakly to $z\in S$. Moreover, $z:=\underset{n\rightarrow \infty}\lim \Pi_S(x_n)$.
\end{thm}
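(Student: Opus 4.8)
The plan is to recognize the iteration \eqref{appl4} as an instance of Algorithm \ref{Alg:AlgL} with $A:=\nabla g$ and $B:=\partial f$, and then to invoke Theorem \ref{th1}. So the work consists of checking that Assumptions \ref{Ass:VI} and \ref{Ass:Parameters} and the weak sequential continuity hypothesis are in force for this choice of $A$ and $B$, and that the generated sequence is exactly $\{x_n\}$.

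First I would verify the operator-theoretic hypotheses. Assumption \ref{Ass:VI}(a) is precisely the standing hypothesis on $E$ (which in particular makes $E$ reflexive and strictly convex with strictly convex dual, so that $J$ is a bijection and $J^{-1}$ is well defined). For (b): since $f$ is proper, convex and lower semi-continuous, $B=\partial f$ is maximal monotone by the Rockafellar theorem already used above for $N_C=\partial\delta_C$; and since $g$ is convex with $L$-Lipschitz Gâteaux derivative, the cocoercivity inequality $\langle\nabla g(x)-\nabla g(y),x-y\rangle\ge \frac{1}{L}\|\nabla g(x)-\nabla g(y)\|^2$ (quoted above from \cite[thm.\ 3.13]{Peypouquetbook}) shows that $A=\nabla g$ is monotone, while it is $L$-Lipschitz by assumption. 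For (c): because $g$ is finite-valued and continuous on $E$ (its gradient being Lipschitz) and Gâteaux-differentiable, one has $\partial g(x)=\{\nabla g(x)\}$ and the subdifferential sum rule gives $\partial(f+g)=\partial f+\nabla g$; hence by Fermat's rule $x$ solves \eqref{appl1} if and only if $0\in\partial f(x)+\nabla g(x)$, i.e. $S=(A+B)^{-1}(0)$, which is nonempty by hypothesis. Finally the prescribed step-size range $0<a\le\lambda_n\le b<\frac{1}{\sqrt{2\mu}\kappa L}$ is exactly Assumption \ref{Ass:Parameters}.

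Next I would match the recursion. The displayed formula for the resolvent above gives $J_{\lambda_n}^{\partial f}(z)={\rm argmin}_{y\in E}\{f(y)+\frac{1}{2\lambda_n}\|y\|^2-\frac{1}{\lambda_n}\langle y,Jz\rangle\}$; applying this with $z=J^{-1}(Jx_n-\lambda_n\nabla g(x_n))$ shows that the $y_n$-step of \eqref{appl4} is precisely $y_n=J_{\lambda_n}^{B}J^{-1}(Jx_n-\lambda_n Ax_n)$, which is Step 1 of Algorithm \ref{Alg:AlgL}, and the $x_{n+1}$-step of \eqref{appl4} coincides verbatim with \eqref{e31}. Hence $\{x_n\}$ is exactly the sequence produced by Algorithm \ref{Alg:AlgL} with this $A$ and $B$. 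Invoking Theorem \ref{th1} then yields that $\{x_n\}$ converges weakly to some $z\in(A+B)^{-1}(0)=S$ with $z=\lim_{n\to\infty}\Pi_{(A+B)^{-1}(0)}(x_n)=\lim_{n\to\infty}\Pi_S(x_n)$, which is the claim. The only step that is not pure bookkeeping is the passage from the minimization problem \eqref{appl1} to the inclusion \eqref{appl2} via the sum rule $\partial(f+g)=\partial f+\nabla g$; this is standard because $g$ is real-valued and continuous, but it is the identity on which the whole reduction rests, so I would state it explicitly.
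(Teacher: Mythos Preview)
Your proposal is correct and follows exactly the approach of the paper: the paper presents Theorem \ref{appl3} as an immediate application of Theorem \ref{th1} with $A=\nabla g$ and $B=\partial f$, the identification being prepared in the discussion preceding the theorem (monotonicity and Lipschitz continuity of $\nabla g$, the reformulation \eqref{appl2}, and the argmin formula for the resolvent of $\partial f$). If anything you are more explicit than the paper in justifying the maximal monotonicity of $\partial f$ and the identity $S=(A+B)^{-1}(0)$ via the sum rule, but the route is the same.
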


\begin{thm}\label{appl5}
Let $E$ be a real 2-uniformly convex Banach space which is also uniformly smooth and the solution set $S$ of  problem \eqref{appl1} be nonempty.
Suppose $ \{ \lambda_n \}_{n=1}^\infty $ satisfies the condition $0<a\leq\lambda_n\leq b<\displaystyle\frac{1}{\sqrt{2\mu}\kappa L}$.
Suppose that $\{\alpha_n\}$ is a real sequence in (0,1) with $\lim_{n \to \infty} \alpha_n = 0 $ and $ \sum_{n = 1}^{\infty} \alpha_n = \infty $. Let the sequence $ \{x_n\}_{n=1}^\infty $ be generated by
\begin{eqnarray}\label{appl6}
\left\{  \begin{array}{llll}
   & x_1 \in E,\\
   & y_n={\rm argmin}_{y \in E}\Big\{f(y)+\frac{1}{2\lambda_n}\|y\|^2-\frac{1}{\lambda_n}\langle y,Jx_n - \lambda_n \nabla g(x_n)\rangle \Big\}\\
   & w_n = J^{-1}[Jy_n - \lambda_n(\nabla g(y_n)-\nabla g(x_n))],\\
   &  x_{n+1} = J^{-1}[\alpha_nJx_1+(1-\alpha_n)Jw_n],~~n \geq 1.
   \end{array}
   \right.
\end{eqnarray}
Then $\{x_n\}$ converges strongly to $z=\Pi_S(x_1)$.
\end{thm}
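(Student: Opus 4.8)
The plan is to read \eqref{appl6} as Algorithm~\ref{Alg:AlgL7} specialised to $A:=\nabla g$ and $B:=\partial f$, and then invoke the strong convergence theorem for Algorithm~\ref{Alg:AlgL7} proved above. For that I must check three things: that Assumptions~\ref{Ass:VI} and \ref{Ass:Parameters} are satisfied for this pair $(A,B)$; that the recursion \eqref{appl6} literally reproduces \eqref{e44}--\eqref{e45}; and that $(A+B)^{-1}(0)$ coincides with the solution set $S$ of \eqref{appl1}, so that $\Pi_S=\Pi_{(A+B)^{-1}(0)}$ and in particular $S\neq\emptyset$ is the nonemptiness hypothesis \ref{Ass:VI}(c).

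First I would verify the assumptions. Part (a) of Assumption~\ref{Ass:VI} is the standing hypothesis on $E$, and Assumption~\ref{Ass:Parameters} is exactly the stated requirement $0<a\le\lambda_n\le b<\frac{1}{\sqrt{2\mu}\kappa L}$. Since $f$ is proper, convex and lower semicontinuous, $B=\partial f$ is maximal monotone by Rockafellar's theorem \cite{RockafellarRT1,RockafellarRT2}. Since $g$ is finite and convex on $E$ it is continuous, hence locally Lipschitz, and being G\^ateaux-differentiable with $L$-Lipschitz derivative, the Baillon--Haddad-type inequality from \cite[Thm.\ 3.13]{Peypouquetbook} quoted before the theorem shows that $A=\nabla g$ is monotone and $L$-Lipschitz; this gives Assumption~\ref{Ass:VI}(b). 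Next I would identify the recursion: the computation preceding the theorem shows $J_{\lambda_n}^{\partial f}(z)={\rm argmin}_{y\in E}\{f(y)+\frac{1}{2\lambda_n}\|y\|^2-\frac{1}{\lambda_n}\langle y,Jz\rangle\}$ for each $z\in E$; taking $z=J^{-1}(Jx_n-\lambda_n\nabla g(x_n))$ and using $J(J^{-1}u)=u$ shows that the $y_n$ in \eqref{appl6} equals $J_{\lambda_n}^{\partial f}J^{-1}(Jx_n-\lambda_n Ax_n)$, which is Step~1 of Algorithm~\ref{Alg:AlgL7}, while the $w_n$- and $x_{n+1}$-lines of \eqref{appl6} are \eqref{e44} and \eqref{e45} with $A=\nabla g$.

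Then I would identify the zero sets. Because $g$ is finite and continuous on all of $E$, the Moreau--Rockafellar sum rule applies with no constraint qualification, so $\partial(f+g)=\partial f+\nabla g$ on $E$; by Fermat's rule, $x$ solves \eqref{appl1} if and only if $0\in\partial(f+g)(x)=(A+B)x$, whence $S=(A+B)^{-1}(0)$, which is therefore nonempty, closed and convex, and $\Pi_S=\Pi_{(A+B)^{-1}(0)}$. With $\lim_n\alpha_n=0$ and $\sum_n\alpha_n=\infty$ assumed, all hypotheses of the strong convergence theorem for Algorithm~\ref{Alg:AlgL7} now hold, and that theorem yields $x_n\to z=\Pi_{(A+B)^{-1}(0)}(x_1)=\Pi_S(x_1)$, as claimed. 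The only non-bookkeeping point is the identification $S=(A+B)^{-1}(0)$, i.e. the sum rule $\partial(f+g)=\partial f+\nabla g$; this is precisely where the finiteness and continuity of $g$ (rather than mere lower semicontinuity) are used, so that no interior-of-domain qualification is required.
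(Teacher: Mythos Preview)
Your proposal is correct and is precisely the approach the paper intends: Theorem~\ref{appl5} is stated in the paper as a direct application, with no separate proof, of the strong convergence theorem for Algorithm~\ref{Alg:AlgL7} under the identifications $A=\nabla g$, $B=\partial f$ and $J_{\lambda_n}^{\partial f}$ given by the argmin formula derived just before the theorem. Your write-up in fact supplies more detail than the paper does (the maximal monotonicity of $\partial f$, the sum rule giving $S=(A+B)^{-1}(0)$, and the explicit matching of \eqref{appl6} with Steps~1--2 of Algorithm~\ref{Alg:AlgL7}), all of which is appropriate and correct.
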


\begin{rem}
\begin{itemize}
  \item Our result in Theorems \ref{appl3} and \ref{appl5} complement the results of Bredies \cite{Bredies,Guan}. Consequently, our results in Section \ref{Sec:Convergence} extend the results of Bredies \cite{Bredies,Guan} to inclusion problem \eqref{bami2}. In particular, we do not assume boundedness of $\{x_n\}$ (which was imposed on the results of \cite{Bredies,Guan}) in our results. Therefore, our result improves on the results of \cite{Bredies,Guan}.
  \item The minimization problem \eqref{appl1} in this section extends the problem studied in \cite{Beck,Combettes10,Nguyen,WangYamin} and other related papers from Hilbert spaces to Banach spaces.
 \end{itemize}
\end{rem}

\section{Conclusion}\label{Sec:Final}
\noindent
We study the Tseng-type algorithm for finding a solution to monotone inclusion problem involving a sum of maximal monotone and a Lipschitz continuous monotone mapping in 2-uniformly convex Banach space which is also uniformly smooth. We prove both weak and strong convergence of sequences of iterates to the solution of the inclusion problem under some appropriate conditions. Many results on monotone inclusion problems with single maximal monotone operator can be considered as special cases of the problem studied in this paper.  As far as we know, this is the first time an inclusion problem involving sum of maximal monotone and Lipschitz continuous monotone operators will be studied in Banach spaces. Therefore, the results of this paper open up many forthcoming results regarding the inclusion problem studied in this paper.  Our next project involves the following.
\begin{itemize}
  \item The results in this paper exclude $L_p$ spaces with $p > 2$. Therefore, extension of the results in this paper to a more general reflexive Banach space will be desired.
  \item How to effectively compute the duality mapping $J$ and the resolvent of maximal monotone mapping $B$ during implementations of our proposed algorithms will be considered further.
  \item The numerical implementations of problem \eqref{bami2} arising from signal processing, image reconstruction, etc will be studied;
  \item Other ways of implementation of the step-sizes $\lambda_n$ to give faster convergence of the proposed methods in this paper will be given.
\end{itemize}

\noindent\textbf{Acknowledgements} The project of the author has received funding from the European Research Council (ERC) under the European Union’s Seventh Framework Program (FP7 - 2007-2013) (Grant agreement No. 616160)


\begin{thebibliography}{111}


\bibitem{alber1}  Alber, Y. I. Metric and generalized projection operators in Banach spaces: properties and applications. Theory and applications of nonlinear operators of accretive and monotone type, 15-50, Lecture Notes in Pure and Appl. Math., 178, Dekker, New York, 1996.




\bibitem{AlberRyazantseva} Alber, Y.; Ryazantseva, I. Nonlinear ill-posed problems of monotone type. Springer, Dordrecht, 2006. xiv+410 pp. ISBN: 978-1-4020-4395-6; 1-4020-4395-3.


\bibitem{Aoyama} Aoyama, K.; Kohsaka, F. Strongly relatively nonexpansive sequences generated by firmly nonexpansive-like mappings.
Fixed Point Theory Appl. 2014, 2014:95, 13 pp.



\bibitem{Avetisyan} Avetisyan, K.; Djordjevi\'{c}, O.; Pavlovi\'{c}, M.
Littlewood-Paley inequalities in uniformly convex and uniformly smooth Banach spaces.
J. Math. Anal. Appl. 336 (2007), no. 1, 31–43.


\bibitem{Ball} Ball, K.; Carlen, E. A.; Lieb, E. H.
Sharp uniform convexity and smoothness inequalities for trace norms.
Invent. Math. 115 (1994), no. 3, 463–482.


\bibitem{Barbu} Barbu, V. Nonlinear Semigroups and Differential Equations in Banach Spaces, Editura
Academiei R.S.R., Bucharest, 1976.

\bibitem{Beauzamy} Beauzamy, B. Introduction to Banach spaces and their geometry. Second edition. North-Holland Mathematics Studies, 68. Notas de Matemática [Mathematical Notes], 86. North-Holland Publishing Co., Amsterdam, 1985. xv+338 pp. ISBN: 0-444-87878-5.


\bibitem{Beck} Beck, A.; Teboulle, M. A fast iterative shrinkage-thresholding algorithm for linear inverse problems.
SIAM J. Imaging Sci. 2 (2009), no. 1, 183-202.


\bibitem{Bredies} Bredies, K.
A forward-backward splitting algorithm for the minimization of non-smooth convex functionals in Banach space.
Inverse Problems 25 (2009), no. 1, 015005, 20 pp.

\bibitem{Briceno}Brice\~no-Arias, L. M. Forward-partial inverse-forward splitting for solving monotone inclusions. J. Optim. Theory Appl. 166 (2015), no. 2, 391-413.


\bibitem{CChen} Chen, G. H.-G.; Rockafellar, R. T.
Convergence rates in forward-backward splitting.
SIAM J. Optim. 7 (1997), no. 2, 421–444.

\bibitem{Cho} Cho, S. Y.; Qin, X.; Wang, L.
Strong convergence of a splitting algorithm for treating monotone operators.
Fixed Point Theory Appl. 2014, 2014:94, 15 pp.

\bibitem{Cioranescu} Cioranescu, I. Geometry of Banach spaces, duality mappings and nonlinear problems. Mathematics and its Applications, 62. Kluwer Academic Publishers Group, Dordrecht, 1990. xiv+260 pp. ISBN: 0-7923-0910-3.


\bibitem{CombettesNguyen}
Combettes, P. L.; Nguyen, Q. V.
Solving composite monotone inclusions in reflexive Banach spaces by constructing best Bregman approximations from their Kuhn-Tucker set.
J. Convex Anal. 23 (2016), no. 2, 481-510.


\bibitem{Combettes10} Combettes, P.; Wajs, V. R.
Signal recovery by proximal forward-backward splitting.
Multiscale Model. Simul. 4 (2005), no. 4, 1168-1200.

\bibitem{Diestel} Diestel, J. Geometry of Banach spaces—selected topics. Lecture Notes in Mathematics, Vol. 485. Springer-Verlag, Berlin-New York, 1975, xi+282 pp.

\bibitem{Figiel} Figiel, T. On the moduli of convexity and smoothness. Studia Math. 56 (1976), no. 2, 121-155.


\bibitem{GibaliThong} Gibali, A.; Thong, D. V.
Tseng type methods for solving inclusion problems and its applications.
Calcolo 55 (2018), no. 4, 55:49.


\bibitem{Guan} Guan, W.-B., Song, W. The generalized forward-backward splitting method for the minimization of the sum of two functions in Banach spaces. Numer. Funct. Anal. Optim. 36 (2015), no. 7, 867-886.


\bibitem{Guler} G\"uler, O. On the convergence of the proximal point algorithm for convex minimization. SIAM J. Control Optim. 29 (1991), 403-419.


\bibitem{Iiduka} Iiduka, H.; Takahashi, W. Weak convergence of a projection algorithm for variational inequalities in a Banach space. J. Math. Anal. Appl. 339 (2008), no. 1, 668–679.

\bibitem{IusemAlfredo} Iusem, A. N.; Svaiter, B. F.
Splitting methods for finding zeroes of sums of maximal monotone operators in Banach spaces.
J. Nonlinear Convex Anal. 15 (2014), no. 2, 379-397.


\bibitem{JiaoWang} Jiao, H.; Wang, F.
On an iterative method for finding a zero to the sum of two maximal monotone operators.
J. Appl. Math. 2014, Art. ID 414031, 5 pp.


\bibitem{Kamimura}
Kamimura, S.; Kohsaka,  F.;  Takahashi, W.
Weak and Strong Convergence Theorems for Maximal Monotone Operators in a Banach Space,
Set-Valued Anal. 12 (2004), 417-429.

\bibitem{KamimuraTakahashi} Kamimura, S.; Takahashi, W. Strong convergence of a proximal-type algorithm in a Banach space. SIAM J. Optim. 13 (2002), no. 3, 938-945 (2003).

\bibitem{Kohsaka}
Kohsaka,  F.;  Takahashi, W.
Strong convergence of an iterative sequence for maximal monotone operators in a Banach space.
Abstr. Appl. Anal. 2004, no. 3, 239-249.

\bibitem{Lionspl}
Lions, P. L. Une m\'ethode it\'erative de r\'esolution d'une in\'equation variationnelle, Israel J. Math.
31 (1978), 204-208.

\bibitem{LionsMercier} Lions, P. L.;  Mercier, B. Splitting algorithms for the sum of two nonlinear operators, SIAM J. Numer. Anal., 16
(1979), 964-979.


\bibitem{Lin} Lin, L.-J.; Takahashi, W.
A general iterative method for hierarchical variational inequality problems in Hilbert spaces and applications.
Positivity 16 (2012), no. 3, 429-453.


\bibitem{LopezG} L\'{o}pez, G.; Mart\'{i}n-M\'{a}rquez, V.; Wang, F.; Xu, H.-K. Forward-backward splitting methods for accretive operators in Banach spaces. Abstr. Appl. Anal. 2012, Art. ID 109236, 25 pp.


\bibitem{mainge} Maing\'{e}, P.-E.
Strong convergence of projected subgradient methods for nonsmooth and nonstrictly convex minimization.
Set-Valued Anal. 16 (2008), no. 7-8, 899–912.


\bibitem{Martinet22} Martinet, B. R\'egularisation d'in\'equations variationnelles par approximations successives. (French) Rev. Française Informat. Recherche Opérationnelle 4 (1970), S\'er. R-3, 154–158.


\bibitem{MoudafiThera} Moudafi,  A.; Thera, M. Finding a zero of the sum of two maximal monotone operators, J. Optim. Theory Appl.,
94 (1997), 425–448.


\bibitem{Nguyen} Nguyen, T. P.; Pauwels, E.; Richard, E.; Suter, B. W. Extragradient method in optimization: convergence and complexity. J. Optim. Theory Appl. 176 (2018), no. 1, 137-162.

\bibitem{Passtygb}  Passty, G. B. Ergodic convergence to a zero of the sum of monotone operators in Hilbert spaces, J. Math. Anal.
Appl., 72 (1979), 383-390.

\bibitem{PeacemanRachford}  Peaceman, D. H.; Rachford, H. H.  The numerical solutions of parabolic and elliptic differential equations, J. Soc.
Indust. Appl. Math., 3 (1955), 28-41.


\bibitem{Peypouquetbook} Peypouquet, J. Convex optimization in normed spaces. Theory, methods and examples. With a foreword by Hedy Attouch. Springer Briefs in Optimization. Springer, Cham, 2015. xiv+124 pp. ISBN: 978-3-319-13709-4; 978-3-319-13710-0.

\bibitem{Reichbook}  Reich, S. A weak convergence theorem for the alternating method with Bregman distances, in: A.G. Kartsatos (Ed.), Theory and Applications of Nonlinear Operators of Accretive and Monotone Type, in: Lecture Notes Pure Appl. Math., vol. 178, Dekker, New York, 1996, pp. 313-318.


\bibitem{Rockafellar27} Rockafellar, R. T.  Monotone operators and the proximal point algorithm. SIAM J. Control. Optim. 14 (1976), 877-898.


\bibitem{RockafellarRT1}
Rockafellar, R. T. Characterization of the subdifferentials of convex functions. Pacific J. Math.
17 (1966), 497-510.

\bibitem{RockafellarRT2}
Rockafellar, R. T. On the maximal monotonicity of subdifferential mappings. Pacific J. Math.
33 (1970), 209-216.



\bibitem{ShehuY} Shehu, Y.; Cai, G.
Strong convergence result of forward-backward splitting methods for accretive operators in Banach spaces with applications.
Rev. R. Acad. Cienc. Exactas F\'{i}s. Nat. Ser. A Math. RACSAM 112 (2018), no. 1, 71-87.


\bibitem{SolodovSvaiter} Solodov, M. V.; Svaiter, B. F. Forcing strong convergence of proximal point iterations in a
Hilbert space.  Math. Programing 87 (2000), 189-202.


\bibitem{TakahashiW1} Takahashi, S.; Takahashi, W.; Toyoda, M.
Strong convergence theorems for maximal monotone operators with nonlinear mappings in Hilbert spaces.
J. Optim. Theory Appl. 147 (2010), no. 1, 27-41.

\bibitem{TakahashiW2} Takahashi, W.; Wong, N.-C.; Yao, J.-C.
Two generalized strong convergence theorems of Halpern's type in Hilbert spaces and applications.
Taiwanese J. Math. 16 (2012), no. 3, 1151-1172.

\bibitem{TakahashiW3} Takahashi, W.
Strong convergence theorems for maximal and inverse-strongly monotone mappings in Hilbert spaces and applications.
J. Optim. Theory Appl. 157 (2013), no. 3, 781-802.

\bibitem{Taka} Takahashi, W. Nonlinear Functional Analysis, Yokohama Publishers, Yokohama 2000.


\bibitem{Tseng}
Tseng, P.
A modified forward-backward splitting method for maximal monotone mappings.
SIAM J. Control Optim. 38 (2000), no. 2, 431–446.


\bibitem{WangWang}
Wang, Y.; Wang, F.
Strong convergence of the forward-backward splitting method with multiple parameters in Hilbert spaces.
Optimization 67 (2018), no. 4, 493–505.


\bibitem{WangYamin} Wang, Y.; Xu, H.-K. Strong convergence for the proximal-gradient method. J. Nonlinear Convex Anal. 15 (2014), no. 3, 581–593.


\bibitem{Xu} Xu, H. K. Inequalities in Banach spaces with applications. Nonlinear Anal. 16 (1991), no. 12, 1127–1138.

\bibitem{XU1} Xu, H. K.
Iterative algorithms for nonlinear operators.
J. London Math. Soc. (2) 66 (2002), no. 1, 240–256.






















  \end{thebibliography}
\end{document}